\tikzset{individu/.style={draw,thick}}
\theoremstyle{plain}
\newtheorem{theorem}{Theorem}[section]
\newtheorem{corollary}[theorem]{Corollary}
\newtheorem{lemma}[theorem]{Lemma}
\newtheorem{proposition}[theorem]{Proposition}
\theoremstyle{definition}
\theoremstyle{remark}
\newtheorem{remark}[theorem]{Remark}
\newcommand{\N}{\mathbb{N}}
\newcommand{\R}{\mathbb{R}}
\newcommand{\ind}[1]{\mathbf{1}_{\left\{#1\right\}}}
\newcommand{\floor}[1]{{\left\lfloor #1 \right\rfloor}}
\newcommand{\ceil}[1]{{\left\lceil #1 \right\rceil}}
\newcommand{\calC}{\mathcal{C}}
\numberwithin{equation}{section}
\DeclareMathOperator{\E}{\mathbb{E}}
\renewcommand{\P}{\mathbb{P}}
\newcommand{\calF}{\mathcal{F}}
\newcommand{\calL}{\mathcal{L}}
\newcommand{\calG}{\mathcal{G}}
\newcommand{\calB}{\mathcal{B}}
\renewcommand{\bar}[1]{\overline{#1}}
\newcommand{\T}{\mathbf{T}}
\renewcommand{\tilde}[1]{\widetilde{#1}}
\renewcommand{\hat}[1]{\widehat{#1}}
\renewcommand{\rho}{\varrho}
\renewcommand{\epsilon}{\varepsilon}
\title{$N$-Branching random walk with $\alpha$-stable spine}
\author{Bastien Mallein\footnote{LPMA, UPMC (Paris 6) and DMA, ENS.  Research partially supported by the ANR project MEMEMO2.}}
\date{\today}
\newcommand{\calM}{\mathcal{M}}
\begin{document}

\maketitle

\begin{abstract}
We consider a branching-selection particle system on the real line, introduced by Brunet and Derrida in \cite{BrD97}. In this model the size of the population is fixed to a constant $N$. At each step individuals in the population reproduce independently, making children around their current position. Only the $N$ rightmost children survive to reproduce at the next step. Bérard and Gouéré studied the speed at which the cloud of individuals drifts in \cite{BeG10}, assuming the tails of the displacement decays at exponential rate; Bérard and Maillard \cite{BeM} took interest in the case of heavy tail displacements. We take interest in an intermediate model, considering branching random walks in which the critical spine behaves as an $\alpha$-stable random walk.
\end{abstract}

\section{Introduction}
\label{sec:introduction}

Let $\calL$ be the law of a random point process on $\R$. Brunet, Derrida et al. introduced in \cite{BrD97,BDMM07} a discrete-time branching-selection particle system on $\R$ in which the size of the population is limited by some integer $N$. This process evolves as follows: for any $n \in \N$, every individual alive at the $n^\text{th}$ generation dies giving birth to children around its current position, according to an independent version of a point process of law $\calL$. Only the $N$ children with the largest position are kept alive and form the $(n+1)^\text{st}$ generation of the process. We write $(x^N_n(1), \ldots, x^N_n(N))$ for the positions at time $n$ of individuals in the process, ranked in the decreasing order. This process is called the \textit{$N$-branching random walk}, or $N$-BRW for short.

In \cite{BeG10}, Bérard and Gouéré proved that under some appropriate integrability conditions, the cloud of particles drifts at some deterministic speed
\begin{equation}
  \label{eqn:vNdef}
  v_N := \lim_{n \to +\infty} \frac{x^{N}_n(1)}{n} = \lim_{n \to +\infty} \frac{x^{N}_n(N)}{n} \quad \mathrm{a.s.},
\end{equation}
and obtained the following asymptotic behaviour for $v_N$
\begin{equation}
  \label{eqn:stateoftheart}
  v_\infty - v_N \underset{N \to +\infty}{\sim} \frac{C}{(\log N)^2},
\end{equation}
in which $C$ is an explicit positive constant that depends only on the law $\calL$. Their argument is based on a coupling (recalled in Section~\ref{subsec:coupling}) between the $N$-branching random walk and a branching random walk, that we define now.

A \textit{branching random walk} with branching law $\calL$ is a process defined as follows. It starts with a unique individual located at position 0 at time 0. At each time $k \in \N$, every individual alive in the process at time $k$ dies giving birth to children. The children are positioned around their parent according to i.i.d. point processes with law $\calL$.

We write $\T$ for the genealogical tree of the process. For $u \in \T$, we denote by $V(u)$ the position of $u$, by $|u|$ the time at which $u$ is alive, by $\pi u$ the parent of $u$ (provided that $u$ is not the root of~$\T$) and by $u_k$ the ancestor alive at time $k$ of $u$. We set $\Omega(u)$ the set of siblings of $u$ i.e. the set of individuals $v \in \T$ such that $\pi v = \pi u$ and $v \neq u$. We observe that $\T$ is a (random) Galton-Watson tree with reproduction law $\# \calL$. 

We list assumptions made on the point process law $\calL$. Let $L$ be a point process with law $\calL$. We first assume that the Galton-Watson tree $\T$ is supercritical and a.s. infinite, i.e.
\begin{equation}
  \label{eqn:reproduction}
  \E\left[ \# L \right]>1 \quad \mathrm{and} \quad \P\left(\# L = 0\right) = 0.
\end{equation}
Note that if $\P\left(\# L = 0\right) > 0$, while $\T$ might be infinite with positive probability, the $N$-BRW dies out almost surely. We also suppose the point process law $\calL$ to be in the stable boundary case in the following sense:
\begin{equation}
  \label{eqn:critical1}
  \E\left[ \sum_{\ell \in L} e^{\ell}\right]=1,
\end{equation}
and the random variable $X$ defined by 
\begin{equation}
  \label{eqn:spinal_law}
  \P(X \leq x) = \E\left[ \sum_{\ell \in L} \ind{\ell \leq x} e^\ell \right]
\end{equation}
is in the domain of attraction of a stable random variable $Y$ verifying $\P(Y \geq 0) \in (0,1)$.

Using \cite[Chapter XVII]{Fel}, we provide a necessary and sufficient condition for $X$ to be in the domain of attraction of $Y$. Let $\alpha \in (0,2]$ be such that $Y$ is an $\alpha$-stable random variable verifying $\P(Y \geq 0) \in (0,1)$. We introduce the function 
\begin{equation}
  \label{eqn:defineLstar}
  L^* : x \mapsto x^{\alpha - 2} \E\left[ Y^2 \ind{|Y| \leq x} \right].
\end{equation}
This function is slowly varying\footnote{i.e. for all $\lambda >0$, $\lim_{t \to +\infty} \frac{L^*(\lambda t)}{L^*(t)}=1$.}. We set
\begin{equation}
  b_n = \inf\left\{ x  > 0 : \frac{x^\alpha}{L^*(x)} = n \right\}.
\end{equation}
The random variable $X$ is in the domain of attraction $Y$ if and only if writing $(S_n)$ for a random walk with step distribution with the same law as $X$, $\frac{S_n}{b_n}$ converges in law to $Y$.

Note that if $\E(|X|) < +\infty$, by strong law of large numbers $\frac{S_n}{n} \to \E(X)$ a.s. Thus \eqref{eqn:spinal_law} implies that $\E(X)=0$. In that case, $\calL$ is in the boundary case as defined in \cite{BiK05}. Up to an affine transformation several point process laws verify these properties, adapting the discussion in \cite[Appendix A]{Jaf12} to this setting.

As $Y$ is an $\alpha$-stable random variable, there exists an $\alpha$-stable Lévy process $(Y_t, t \geq 0)$ such that $Y_1$ has the same law as $Y$. Using \cite[Lemma 1]{Mog74}, we define
\begin{equation}
  \label{eqn:defcstar}
  C_* := \lim_{t \to +\infty} -\frac{1}{t} \log \P\left(|Y_s| \leq \frac{1}{2}, s \leq t\right) \in (0,+\infty).
\end{equation}

The next integrability assumption on $\calL$ ensure that the spine of the branching random walk (see Section 2.1) behaves as a typical individual staying close to the boundary of the process:
\begin{equation}
  \label{eqn:integrability}
  \lim_{x \to +\infty} \frac{x^\alpha}{L^*(x)} \E\left[  \sum_{\ell \in L} e^\ell \ind{\log \left(\sum_{\ell' \in L} e^{\ell'} \right) > x + \ell} \right] = 0.
\end{equation}
Finally, we assume that
\begin{equation}
  \label{eqn:lowerIntegrability}
  \E\left[ \left| \max_{\ell \in L} \ell \right|^2 \right] < +\infty,
\end{equation}
this condition is not expected to be optimal but is sufficient to bound from below in a crude way the minimal position in the $N$-BRW, that we use when the coupling fails.

\begin{theorem}
\label{thm:main}
Under the previous assumptions, for an $N$-BRW with reproduction law $\calL$, the sequence $(v_N, N \geq 1)$ defined in \eqref{eqn:vNdef} exists and verifies
\begin{equation}
  \label{eqn:main}
  v_N \underset{N \to +\infty}{\sim}  -C_* \frac{L^*(\log N)}{(\log N)^{\alpha}}.
\end{equation}
\end{theorem}

We observe that if $\calL$ satisfies
\begin{equation}
  \label{eqn:integrability2}
  \E\left[ \sum_{\ell \in L} e^\ell \log \left(\sum_{\ell' \in L} e^{\ell'-\ell} \right)^2\right] + \E\left[ \sum_{\ell \in L} \ell^{2} e^\ell \right] < +\infty,
\end{equation}
then Theorem \ref{thm:main} implies that \eqref{eqn:stateoftheart} holds with $C = \frac{\pi^2}{2} \E\left[ \sum_{\ell \in L} \ell^2 e^\ell \right]$,
which is consistent with the result of Bérard and Gouéré

\paragraph*{Examples.}
We present two point process laws that satisfy the hypotheses of Theorem \ref{thm:main}. Let $X$ be the law of a random variable on $(0,+\infty)$. We write $\Lambda(\theta)$ for the log-Laplace transform of $X$. We assume there exists $\theta^*>0$ such that $\Lambda(\theta^*)= \log 2$, and $\alpha > 1$ verifying
\[
  \P(X \geq x) \sim e^{-\theta^* x} x^{-\alpha - 1}.
\]
In this case, there exists $\mu := \E\left( X e^{\theta^*X} \right)/2$ such that the point process $\calL$ defined as the law of a pair of independent random variables $(Y_1,Y_2)$ which have the same law as $\theta^*(X-\mu)$ satisfies the hypotheses of Theorem \ref{thm:main}.

Let $\nu_\alpha$ be the law of an $\alpha$-stable random variable $Y$ such that $\P(Y \geq 0) \in (0,1)$. If $\tilde{\calL}$ is the law of a Poisson point process on $\R$ with intensity $\nu(dx)e^{-x}$, then $\tilde{\calL}$ satisfies assumptions of Theorem~\ref{thm:main}, and the spine of such a branching random walk is in the domain of attraction of $Y$.

The rest of the article is organised as follows. In Section \ref{sec:lemmas}, we introduce the spinal decomposition, that links the computation of additive branching random walk moments with random walks estimates; and the Mogul'ski\u\i{} small deviations estimate for random walks. In Section \ref{sec:barrier}, these results are used to compute the asymptotic behaviour of the survival probability of a branching random walk with a killing line of slope $-\epsilon$, using the same technique as \cite{GHS11}. This asymptotic is then used in Section \ref{sec:conclusion} to prove Theorem \ref{thm:main}, applying the methods introduced in \cite{BeG10}.

\section{Spinal decomposition and small deviations estimate}
\label{sec:lemmas}

\subsection{The spinal decomposition}

The spinal decomposition is a tool introduced by Lyons, Pemantle and Peres in \cite{LPP95} to study branching processes. It has been extended to branching random walks by Lyons in \cite{Lyo97}. It provides two descriptions of a law absolutely continuous with respect to the law $\P_a$ , of the branching random walk $(\T,V+a)$. We set $W_n = \sum_{|u| = n} e^{V(u)}$ and $\calF_n = \sigma(u,V(u), |u| \leq n)$ the natural filtration on the set of marked trees. By \eqref{eqn:critical1}, $(W_n)$ is a non-negative martingale. We define the probability measure $\bar{\P}_a$ on $\calF_\infty$ such that for any $n \in \N$,
\begin{equation}
  \label{eqn:measurechange}
  \left.\frac{d\bar{\P}_a}{d\P_a}\right|_{\calF_n} = e^{-a} W_n.
\end{equation}
We write $\bar{\E}_a$ for the corresponding expectation.

We construct a second probability measure $\hat{\P}_a$ on the set of marked trees with spine. For $(\T,V)$ a marked tree, we say that $w = (w_n, n \geq 0)$ is a spine of $\T$ if for any $n \in \N$, $|w_n|=n$, $w_n \in \T$ and $(w_n)_{n-1} = w_{n-1}$. We introduce
\begin{equation}
  \label{eqn:measurespine}
  \frac{d\hat{\calL}}{d\calL} = \sum_{\ell \in L} e^{\ell},
\end{equation}
another point process law. The probability measure $\hat{\P}_a$ is the law of the process $(\T,V,w)$ constructed as follows. It starts at time $0$ with a unique individual $w_0$ located at position $a$. It makes children according to a point process of law $\hat{\calL}$. Individual $w_1$ is chosen at random among children $u$ of $w_0$ with probability $e^{V(u)}/W_1$. Similarly, at each generation $n \in \N$, every individual $u$ in the $n^\text{th}$ generation dies, giving birth to children according to independent point processes, with law $\hat{\calL}$ if $u = w_n$ or law $\calL$ otherwise. Finally $w_{n+1}$ is chosen among children $v$ of $w_n$ with probability proportional to $e^{V(v)}$. To shorten notations, we write $\bar{\P}=\bar{\P}_0$, $\hat{\P}=\hat{\P}_0$.
\begin{proposition}[Spinal decomposition]
\label{pro:spinaldecomposition}
Under assumption \eqref{eqn:critical1}, for any $n \in \N$, we have
\[ \left.\hat{\P}_a\right|_{\calF_n} = \left. \bar{\P}_a\right|_{\calF_n}. \]
Moreover, for any $z \in \T$ such that $|z|=n$,
\[ \hat{\P}_a\left( \left. w_n = z \right| \calF_n\right) = \frac{e^{V(z)}}{W_n}, \]
and $(V(w_n),n \geq 0)$ is a random walk starting from $a$, with step distribution defined in \eqref{eqn:spinal_law}.
\end{proposition}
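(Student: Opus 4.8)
The plan is to prove the three assertions by directly comparing, for each fixed $n$, the joint law under $\hat{\P}_a$ of the marked tree up to generation $n$ together with the spine vertex $w_n$, with the law $\P_a$. Along the construction of $\hat{\P}_a$, each spine ancestor $w_0, w_1, \dots, w_n$ reproduces according to $\hat{\calL}$ instead of $\calL$, contributing a Radon--Nikodym factor $\sum_{v : \pi v = w_k} e^{V(v) - V(w_k)}$ relative to $\P_a$ by \eqref{eqn:measurespine}, and then $w_{k+1}$ is selected among the children of $w_k$ with conditional probability $e^{V(w_{k+1}) - V(w_k)} / \sum_{v : \pi v = w_k} e^{V(v) - V(w_k)}$; the product of these two quantities over a single spine edge is exactly $e^{V(w_{k+1}) - V(w_k)}$, while every vertex off the spine reproduces under the same law $\calL$ as under $\P_a$ and contributes a factor $1$. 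Telescoping over $k = 0, \dots, n-1$, starting from $V(w_0) = a$, should yield for every non-negative $\calF_n$-measurable $F$ and every vertex $z$ with $|z| = n$
\[
  \hat{\E}_a\bigl[ F\, \ind{w_n = z} \bigr] = e^{-a}\, \E_a\bigl[ F\, e^{V(z)}\, \ind{z \in \T} \bigr].
\]
Since $\calL$ need not possess a density, I would make this step rigorous by induction on $n$ rather than by manipulating literal densities: granting the displayed identity and $\hat{\P}_a|_{\calF_n} = \bar{\P}_a|_{\calF_n}$, one passes from generation $n$ to $n+1$ by conditioning on $\calF_{n+1}$, using the branching property under $\P_a$ to write $W_{n+1} = \sum_{|u| = n} e^{V(u)} \sum_{v : \pi v = u} e^{V(v) - V(u)}$, and observing that the weight $e^{V(u)}/W_n$ together with the factor $\sum_{v : \pi v = u} e^{V(v) - V(u)}$ reproduces precisely the size-biased selection $w_n = u$ and the replacement of the offspring law of $u$ by $\hat{\calL}$ in the definition of $\hat{\P}_a$.

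Granting the displayed identity, the three claims follow quickly. Summing over all $z$ at generation $n$ and using $\sum_{|z| = n} e^{V(z)} = W_n$ gives $\hat{\E}_a[F] = e^{-a}\, \E_a[F\, W_n] = \bar{\E}_a[F]$ for every non-negative $\calF_n$-measurable $F$, that is $\hat{\P}_a|_{\calF_n} = \bar{\P}_a|_{\calF_n}$. For the conditional law of $w_n$, take $F$ to be $\calF_n$-measurable; then $\hat{\E}_a[F\, \ind{w_n = z}] = e^{-a}\E_a[F\, e^{V(z)}] = \bar{\E}_a[F\, e^{V(z)}/W_n] = \hat{\E}_a[F\, e^{V(z)}/W_n]$, where the last equality uses the first claim, and since $e^{V(z)}/W_n$ is $\calF_n$-measurable this identifies $\hat{\P}_a(w_n = z \mid \calF_n) = e^{V(z)}/W_n$. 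Finally, for the spine displacements, the construction gives, for bounded measurable $g$,
\[
  \hat{\E}_a\bigl[ g\bigl(V(w_{n+1}) - V(w_n)\bigr) \bigm| \calF_n,\, w_0, \dots, w_n \bigr]
  = \E_{\hat{\calL}}\!\left[ \frac{\sum_{\ell \in L} g(\ell)\, e^{\ell}}{\sum_{\ell' \in L} e^{\ell'}} \right]
  = \E_{\calL}\!\left[ \sum_{\ell \in L} g(\ell)\, e^{\ell} \right] = \E[g(X)],
\]
the middle equality using $\frac{d\hat{\calL}}{d\calL} = \sum_{\ell' \in L} e^{\ell'}$ and the last one \eqref{eqn:spinal_law}. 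As this conditional expectation does not depend on the past, the increments $(V(w_n) - V(w_{n-1}))_{n \ge 1}$ are i.i.d.\ with the law of $X$, so $(V(w_n))_{n \ge 0}$ is a random walk started at $a$ with step distribution \eqref{eqn:spinal_law}.

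I expect the only genuine obstacle to be the bookkeeping in the inductive step: one must fix the measurable space of marked trees with a distinguished spine, handle the independent (and a priori unboundedly many) reproductions of the vertices at generation $n$ under $\P_a$, and verify that the product of the offspring-size bias $\sum_{v : \pi v = u} e^{V(v) - V(u)}$ and the within-progeny bias $e^{V(w_{n+1}) - V(w_n)}$ recombine correctly with the martingale density $e^{-a} W_{n+1}$ defining $\bar{\P}_a$. Once the one-generation transition is written out carefully, the remainder is formal. This is the branching random walk spinal decomposition of Lyons \cite{Lyo97} adapted to the normalisation \eqref{eqn:critical1}; the argument is standard and is recorded here for completeness.
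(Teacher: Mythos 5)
The paper does not prove this proposition: it is quoted as a known result, with the argument deferred to Lyons, Pemantle and Peres \cite{LPP95} and Lyons \cite{Lyo97}. Your sketch correctly reproduces that standard argument --- the one-generation identity $\hat{\E}_a[F\,\ind{w_n=z}] = e^{-a}\E_a[F\,e^{V(z)}\ind{z\in\T}]$ obtained by telescoping the offspring-size bias against the within-progeny selection probability, from which all three claims follow exactly as you derive them, and the computation of the step distribution of $V(w_{n+1})-V(w_n)$ via the density $\tfrac{d\hat{\calL}}{d\calL}=\sum_{\ell'\in L}e^{\ell'}$ is correct. So the proposal is sound and matches the approach of the cited reference; the only part left informal is the inductive bookkeeping you yourself flag, which is routine.
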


A straightforward consequence of this proposition is the many-to-one lemma. Introduced by Peyrière in \cite{Pey74}, this lemma links additive moments of the branching random walks with random walk estimates. Given $(X_n)$ an i.i.d. sequence of random variables with law defined by \eqref{eqn:spinal_law}, we set $S_n = S_0 + \sum_{j=1}^n X_j$ such that $\P_a(S_0=a)=1$.

\begin{lemma}[Many-to-one lemma]
\label{lem:manytoone}
Under assumption \eqref{eqn:critical1}, for any $n \geq 1$ and measurable non-negative function $g$, we have
\begin{equation}
  \label{eqn:manytoone}
  \E_a\left[ \sum_{|u|=n} g(V(u_1),\cdots , V(u_n))\right] = \E_a\left[ e^{a-S_n} g(S_1,\cdots , S_n) \right].
\end{equation}
\end{lemma}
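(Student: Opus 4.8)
The plan is to read the identity off the spinal decomposition (Proposition~\ref{pro:spinaldecomposition}) by computing one and the same functional of the spine in two ways. Fix $n \geq 1$ and a measurable $g : \R^n \to [0,+\infty]$, and work under $\hat{\P}_a$. Consider the random variable
\[
  Z := e^{a - V(w_n)}\, g\bigl(V(w_1), \ldots, V(w_n)\bigr),
\]
a functional of the spine alone. I will evaluate $\hat{\E}_a[Z]$ first through the random-walk description of the spine, which produces the right-hand side of \eqref{eqn:manytoone}, and then through the change of measure \eqref{eqn:measurechange}, which produces the left-hand side.

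For the random-walk side, Proposition~\ref{pro:spinaldecomposition} states that under $\hat{\P}_a$ the trajectory $(V(w_k))_{k \geq 0}$ is a random walk started at $a$ with increment law \eqref{eqn:spinal_law}, so it has exactly the law of $(S_k)_{k \geq 0}$ under $\P_a$. Substituting,
\[
  \hat{\E}_a[Z] = \E_a\bigl[ e^{a - S_n} g(S_1, \ldots, S_n) \bigr].
\]

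For the tree side, I condition on $\calF_n$. Since $(w_n)_k = w_k$, the values $V(w_k) = V\bigl((w_n)_k\bigr)$ are determined by $w_n$, and Proposition~\ref{pro:spinaldecomposition} gives $\hat{\P}_a(w_n = z \mid \calF_n) = e^{V(z)}/W_n$ for every $z \in \T$ with $|z| = n$. Hence
\begin{align*}
  \hat{\E}_a[Z \mid \calF_n]
  &= \sum_{|z| = n} \frac{e^{V(z)}}{W_n}\, e^{a - V(z)}\, g\bigl(V(z_1), \ldots, V(z_n)\bigr)\\
  &= \frac{e^a}{W_n} \sum_{|z| = n} g\bigl(V(z_1), \ldots, V(z_n)\bigr),
\end{align*}
which is $\calF_n$-measurable. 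Taking expectations, using first that $\hat{\P}_a$ and $\bar{\P}_a$ agree on $\calF_n$ and then \eqref{eqn:measurechange} together with the $\calF_n$-measurability of the integrand,
\begin{align*}
  \hat{\E}_a[Z]
  &= \bar{\E}_a\!\left[ \frac{e^a}{W_n} \sum_{|z| = n} g\bigl(V(z_1), \ldots, V(z_n)\bigr) \right]\\
  &= \E_a\!\left[ e^{-a} W_n \cdot \frac{e^a}{W_n} \sum_{|u| = n} g\bigl(V(u_1), \ldots, V(u_n)\bigr) \right]\\
  &= \E_a\!\left[ \sum_{|u| = n} g\bigl(V(u_1), \ldots, V(u_n)\bigr) \right].
\end{align*}
Comparing the two evaluations of $\hat{\E}_a[Z]$ gives \eqref{eqn:manytoone}.

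I do not expect any real obstacle: the lemma is essentially a restatement of Proposition~\ref{pro:spinaldecomposition}, and the only care needed is measure-theoretic. Because $g \geq 0$, all the quantities above take values in $[0,+\infty]$ and every interchange of a sum with an expectation is legitimate by Tonelli's theorem; alternatively one first treats $g$ an indicator and passes to the general case by monotone convergence. The division by $W_n$ is harmless: by \eqref{eqn:reproduction} we have $\# L \geq 1$ a.s., so the Galton--Watson tree $\T$ never dies out and $W_n = \sum_{|u| = n} e^{V(u)} > 0$ a.s., while $\E_a[W_n] = e^a < \infty$ by \eqref{eqn:critical1} ensures $W_n < \infty$ a.s.; thus $z \mapsto e^{V(z)}/W_n$ is a genuine probability kernel on $\{|z| = n\}$ and the cancellation $e^{-a} W_n \cdot e^a / W_n = 1$ holds on a set of full measure. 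The rest is bookkeeping with the change of measure.
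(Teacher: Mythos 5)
Your proof is correct and is essentially the paper's argument: the same three-step chain (change of measure \eqref{eqn:measurechange}, conditional law of $w_n$ given $\calF_n$, random-walk law of the spine) is used, merely written in the reverse order, starting from the spine functional $e^{a-V(w_n)}g(V(w_1),\ldots,V(w_n))$ instead of from the left-hand side. The added measure-theoretic remarks (Tonelli, $0 < W_n < \infty$ a.s.) are correct and only make explicit what the paper leaves implicit.
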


\begin{proof}
We use Proposition \ref{pro:spinaldecomposition} to compute
\begin{align*}
  \E_a\left[ \sum_{|u|=n} g(V(u_1),\cdots , V(u_n))\right] & = \bar{\E}_a \left[ \frac{e^{a}}{W_{n}} \sum_{|u|=n} g(V(u_1), \cdots , V(u_n)) \right]\\
  & = \hat{\E}_a\left[ e^{a}\sum_{|u|=n} \hat{\P}_a\left( \left. w_n = u \right| \calF_n \right) e^{-V(u)} g(V(u_1),\cdots , V(u_n)) \right]\\
  & = \hat{\E}_a \left[ e^{a-V(w_n)} g(V(w_1), \cdots , V(w_n)) \right].
\end{align*}
We now observe that $(S_n, n \geq 0)$ under $\P_a$ has the same law as $(V(w_n), n \geq 0)$ under $\hat{\P}_a$, which ends the proof.
\end{proof}

The many-to-one lemma can be used to bound the maximal displacement in a branching random walk. For example, for all $y \geq 0$, we have
\begin{align*}
  \E\left[ \sum_{u \in  \T} \ind{V(u) \geq y} \ind{V(u_j) < y, j < |u|} \right]
  &= \sum_{k=1}^{+\infty} \E\left[ \sum_{|u|=k} \ind{V(u) \geq y} \ind{V(u_j) < y, j < |u|} \right]\\
  &= \sum_{k=1}^{+\infty} \E\left[ e^{-S_k} \ind{S_k \geq y} \ind{S_j < y, j < k} \right]\\
  &\leq e^{-y} \sum_{k=1}^{+\infty} \P\left( S_k \geq y, S_j < y, j < k \right) \leq e^{-y}.
\end{align*}
Obviously, this computation leads to
\begin{equation}
  \label{eqn:maxdis}
  \sup_{n \in \N} \P(\max_{|u|=n} V(u) \geq y) \leq \P\left(\max_{u \in \T} V(u) \geq y \right) \leq e^{-y}.
\end{equation}

Using the spinal decomposition, to compute the number of individuals in a branching random walk who stay in a well-chosen path, it is enough to know the probability for a random walk decorated by additional random variables to follow that path.

\subsection{Small deviations estimate and variations}

Let $S$ be a random walk in the domain of attraction of an $\alpha$-stable random variable $Y$. We recall that
\[ L^*(u) = u^{\alpha - 2} \E(Y^2 \ind{|Y| \leq u}) \quad \mathrm{and} \quad \frac{b_n^\alpha}{L^*(b_n)}=n.\]
For any $z \in \R$, we define $\P_z$ such that $S$ under law $\P_z$ has the same law as $S+z$ under law~$\P$. The Mogul'ski\u\i{} small deviation estimate enables to compute the probability for $S$ to present fluctuations of order $o(b_n)$.
\begin{theorem}[Mogul'ski\u\i{} \cite{Mog74}]
\label{thm:mogulskii}
Let $(a_n) \in \R_+^\N$ be such that
\[ \lim_{n \to +\infty} a_n = +\infty, \lim_{n \to +\infty} \frac{a_n}{b_n} = 0. \]
Let $f<g$ be two continuous functions such that $f(0) < 0 < g(0)$. If $\P(Y \leq 0) \in (0,1)$ then
\[
  \lim_{n \to +\infty} \frac{a_n^\alpha}{n L^*(a_n)} \log \P\left[ \frac{S_j}{a_n} \in \left[f\left(j/n\right), g\left(j/n\right)\right], 0 \leq j \leq n \right]
  = - C_* \int_0^1 \frac{ds}{(g(s)-f(s))^\alpha},
\]
where $C_*$ is defined in \eqref{eqn:defcstar}.
\end{theorem}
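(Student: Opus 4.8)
The plan is to reduce the estimate with the continuous boundaries $f,g$ to the case of piecewise \emph{constant} boundaries, and to identify the rate in the constant case from the estimate \cite[Lemma~1]{Mog74} for the stable Lévy process together with the $\alpha$-self-similarity of $Y$.

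\textbf{The constant-boundary estimate.} The core lemma is that for every $c>0$ and $\lambda>0$,
\begin{equation*}
  \lim_{n\to\infty}\frac{a_n^\alpha}{nL^*(a_n)}\log\P\Big(S_j\in[-ca_n,ca_n],\ 0\le j\le\lfloor\lambda n\rfloor\Big) = -\lambda\,\frac{C_*}{(2c)^\alpha},
\end{equation*}
with the same rate — it depends on the tube only through its width — when $[-ca_n,ca_n]$ is shifted to an arbitrary $[ua_n,va_n]$ with $u<0<v$, when the walk is started anywhere inside the tube, and/or when it is additionally required to end inside a window $[-\delta a_n,\delta a_n]$; only the subexponential prefactors change. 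To prove it, pick $m=m_n$ with $b_{m_n}\sim ca_n/\gamma$ for a fixed $\gamma>0$ (possible since $b$ is regularly varying of index $1/\alpha$), so that $m_n\to\infty$, $m_n=o(n)$ (because $a_n=o(b_n)$), and $m_n\sim(c/\gamma)^\alpha a_n^\alpha/L^*(a_n)$. Split $\{0,\dots,\lfloor\lambda n\rfloor\}$ into $\sim\lambda n/m_n$ blocks of length $m_n$. For the upper bound, the Markov property bounds the tube probability by the $\lfloor\lambda n/m_n\rfloor$-th power of $\sup_{|x|\le ca_n}\P_x(S_j\in[-ca_n,ca_n],j\le m_n)$; by the functional convergence $S_{\lfloor m_n t\rfloor}/b_{m_n}\Rightarrow Y_t$ this supremum tends to $P_\gamma:=\sup_{|y|\le\gamma}\P(y+Y_s\in[-\gamma,\gamma],s\le1)$, and by $\alpha$-self-similarity one gets $\log P_\gamma\sim-C_*/(2\gamma)^\alpha$ as $\gamma\to0$ using \cite[Lemma~1]{Mog74}; multiplying by $\frac{a_n^\alpha}{nL^*(a_n)}$ and letting $\gamma\to0$ yields the upper bound $-\lambda C_*/(2c)^\alpha$. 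For the matching lower bound, restrict on each block to a slightly narrower tube and force $S_{km_n}$ into $[-\delta a_n,\delta a_n]$; the Markov property factorizes the probability into $\sim\lambda n/m_n$ identical factors, each of which — as $\delta\to0$ and then $\gamma\to0$ — carries the same exponential rate, the nontrivial input being a local limit theorem for the stable walk confined to a tube.

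\textbf{Discretization.} Fix a partition $0=t_0<\dots<t_p=1$ of mesh $\eta$, refining a neighbourhood of $0$. By uniform continuity of $f,g$ and $\inf_{[0,1]}(g-f)>0$, on each $[t_{i-1},t_i]$ the set $[f(s),g(s)]$ is sandwiched between two constant tubes whose widths both lie within $\omega(\eta)$ of $(g-f)(s_i)$ for some $s_i\in[t_{i-1},t_i]$, with $\omega(\eta)\to0$, and which contain $0$ in their interior for the blocks adjacent to $0$ since $f(0)<0<g(0)$. For the upper bound, the Markov property gives $\P(\text{tube }[f,g])\le\prod_{i=1}^p\P(\text{block }i\text{ in the wider constant tube})$; applying the constant-boundary estimate with $\lambda=t_i-t_{i-1}$ to each factor and multiplying by $\frac{a_n^\alpha}{nL^*(a_n)}$ gives
\begin{equation*}
  \limsup_{n\to\infty}\frac{a_n^\alpha}{nL^*(a_n)}\log\P(\text{tube }[f,g]) \le -C_*\sum_{i=1}^p\frac{t_i-t_{i-1}}{\big((g-f)(s_i)-\omega(\eta)\big)^\alpha},
\end{equation*}
and the right-hand side converges to $-C_*\int_0^1 ds/(g(s)-f(s))^\alpha$ as $\eta\to0$. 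For the lower bound, intersect over $i$ the events that block $i$ stays in the slightly \emph{narrower} constant tube and that $S_{\lfloor t_in\rfloor}$ lies in a window of width proportional to $a_n$ contained in the overlap of consecutive narrower tubes — nonempty for $\eta$ small — and apply the pinned lower bound of the previous step block by block; letting $n\to\infty$ and then $\eta\to0$ gives the reverse inequality, which completes the proof.

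\textbf{Main obstacle.} The delicate point is the lower bound of the constant-boundary estimate: gluing the blocks requires that pinning the confined walk near the centre of the tube at the block ends costs only a subexponential factor, i.e. a local limit theorem for a random walk in the domain of attraction of $Y$ conditioned to remain in an interval of width $\asymp a_n$, uniform in the entrance point. It is here, together with the positivity and finiteness of $C_*$, that the oscillation hypothesis $\P(Y\le0)\in(0,1)$ is essential, as it rules out the degenerate monotone Lévy processes. A minor additional care is that $a_n$ need not equal any $b_k$, which is absorbed by the regular variation of $b_n$ and the slow variation of $L^*$.
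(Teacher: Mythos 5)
The paper does not actually prove Theorem~\ref{thm:mogulskii}: it is imported verbatim from \cite{Mog74}, so there is no proof in the text to compare against. Your sketch is, however, a correct reconstruction of the classical argument, and it is structurally the same as the proof the paper \emph{does} give for the enriched variant, Lemma~\ref{lem:mogulskiiinf}: block decomposition via the Markov property, functional convergence of the rescaled walk to the stable Lévy process, the confinement estimate \cite[Lemma~1]{Mog74} for the limit, and a piecewise-constant discretization of $(f,g)$ with pinning at the block endpoints. Two remarks on where you diverge from, or could borrow from, the paper's version. First, you normalize blocks by choosing $m_n$ with $b_{m_n}\sim ca_n/\gamma$ and send $\gamma\to 0$, using the small-ball asymptotics $\log P_\gamma\sim -C_*(2\gamma)^{-\alpha}$; the paper instead takes blocks of length $r_n=\lfloor A a_n^\alpha/L^*(a_n)\rfloor$ and sends $A\to+\infty$, using the long-time limit in \eqref{eqn:defcstar}. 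By $\alpha$-self-similarity these are the same computation, so this is cosmetic. Second, the step you flag as the main obstacle — pinning the confined walk into a window of width $\asymp a_n$ at the end of each block — does not require a local limit theorem: since the window has macroscopic width on the scale $a_n$, the lower bound follows from Theorem~\ref{thm:donsker} together with the portmanteau inequality for the open event $\{Y_A\in(x',y'),\,Y_u\in(f,g),\,u\le A\}$, whose probability is strictly positive and whose large-$A$ rate is again \cite[Lemma~1]{Mog74}; this is exactly how \eqref{eqn:firstestimate} is obtained. The one step you gloss over that does need an explicit argument is the convergence of $\sup_{|x|\le ca_n}\P_x(\cdot)$ in your upper bound: a supremum of probabilities does not pass to the limit for free, and one needs the $\delta$-covering of the starting interval used in the proof of Corollary~\ref{cor:mogulskiisup} (at the cost of widening the tube by $\delta$, then letting $\delta\to 0$). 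With that addition your outline is complete and correct.
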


This result can be seen as a consequence of an $\alpha$-stable version of the Donsker theorem, obtained by Prokhorov. This result yields the convergence of the normalized trajectory of the random walk $S$ to the trajectory of an $\alpha$-stable Lévy process $(Y_t, t \in [0,1])$ such that $Y_1$ has the same law as $Y$.

\begin{theorem}[Prokhorov \cite{Prokhorov}]
\label{thm:donsker}
If $\frac{S_n}{b_n}$ converges in law to a stable random variable $Y$, then
$(\frac{S_\floor{nt}}{b_n}, t \in [0,1])$ converges in law to $(Y_t, t \in [0,1])$ in $\mathcal{D}([0,1])$ equipped with the Skorokhod topology.
\end{theorem}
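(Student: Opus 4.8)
The plan is to prove weak convergence in $(\mathcal{D}([0,1]), J_1)$ by the classical two-step route: first convergence of the finite-dimensional distributions to those of $(Y_t, t \in [0,1])$, then tightness of the laws of $(S_{\lfloor nt\rfloor}/b_n, t \in [0,1])$ in the Skorokhod space; tightness together with the finite-dimensional convergence then yields the claimed convergence in distribution. The single preliminary fact driving everything is the regular variation of the normalizing sequence: since $b_n^\alpha/L^*(b_n) = n$ with $L^*$ slowly varying and $b_n \to +\infty$, the sequence $(b_n)$ is regularly varying of index $1/\alpha$, so that $b_{\lfloor nt\rfloor}/b_n \to t^{1/\alpha}$ for every fixed $t \in (0,1]$, uniformly on compact subsets of $(0,1]$ by the uniform convergence theorem for regularly varying functions.

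For the finite-dimensional distributions, fix $0 < t_1 < \dots < t_k \leq 1$ and decompose $S_{\lfloor nt_i\rfloor}/b_n$ into the increments $\Delta_i := (S_{\lfloor nt_i\rfloor} - S_{\lfloor nt_{i-1}\rfloor})/b_n$, which are independent by the random walk structure of $S$. Each $\Delta_i$ is, up to the deterministic factor $b_{\lfloor nt_i\rfloor - \lfloor nt_{i-1}\rfloor}/b_n \to (t_i - t_{i-1})^{1/\alpha}$, a normalized sum of $\lfloor nt_i\rfloor - \lfloor nt_{i-1}\rfloor \sim n(t_i - t_{i-1})$ i.i.d. copies of $X$, hence $\Delta_i \Rightarrow (t_i - t_{i-1})^{1/\alpha} Y$, which by the self-similarity $Y_{ct} \egaldistr c^{1/\alpha} Y_t$ of the $\alpha$-stable Lévy process has the law of $Y_{t_i} - Y_{t_{i-1}}$. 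Multiplying the characteristic functions of these independent increments and matching with the independence and stationarity of increments of $(Y_t)$ identifies the joint limit law as that of $(Y_{t_1}, \dots, Y_{t_k})$. (The value $\alpha = 1$ requires carrying along a logarithmic centering in this step, but the argument is otherwise unchanged.)

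For tightness I would invoke Billingsley's criterion in $\mathcal{D}([0,1])$: boundedness in probability of the endpoint marginals, which is immediate from the one-dimensional convergence established above, together with control of the Skorokhod modulus $w'(\cdot,\delta)$. The key ingredient is a maximal inequality for partial sums of the Ottaviani--Lévy type, giving $\P(\max_{j \leq m}|S_j| > \lambda b_m) \leq C\,\P(|S_m| > (\lambda - c)b_m)$ for suitable constants, whose right-hand side is bounded uniformly in $m$ thanks to $S_m/b_m \Rightarrow Y$. Splitting $[0,1]$ into blocks of length of order $\delta$ and applying this bound on each block controls the path oscillation away from a single large excursion per block; letting $n \to +\infty$ and then $\delta \to 0$ produces the required estimate on $w'$.

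The main obstacle is precisely this tightness step, and specifically the fact that when $\alpha < 2$ the heavy tails of $X$ force genuine jumps in the limit, so there is no uniform (Kolmogorov) modulus of continuity and one must work with the $J_1$ modulus $w'$ that tolerates one large jump per small interval. The clean way through is a truncation: decompose $X$ into a small-jumps part of finite variance, whose rescaled partial sums are tight by the classical Donsker theorem and contribute only the Gaussian (or, for $\alpha<2$, vanishing) component, and a large-jumps part whose rescaled partial sums behave like a process with finitely many jumps, hence automatically tight and $J_1$-regular; one then recombines and lets the truncation level grow. Once tightness holds, convergence in distribution in $(\mathcal{D}([0,1]), J_1)$ follows, and since the limiting Lévy process $(Y_t)$ has no fixed times of discontinuity, the finite-dimensional limits determine it unambiguously.
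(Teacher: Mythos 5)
The paper does not prove this statement: it is quoted verbatim from Prokhorov's 1956 paper (the invariance principle for random walks in the domain of attraction of a stable law), so there is no internal proof to compare against. Your outline is the standard textbook route --- convergence of finite-dimensional distributions via independence of increments, regular variation $b_{\floor{nt}}/b_n \to t^{1/\alpha}$, and self-similarity of the stable limit, followed by tightness in the $J_1$ topology --- and the finite-dimensional part is correct as written, including the caveat about centering when $\alpha=1$ (strict stability is implicitly assumed here, since the hypothesis is convergence of $S_n/b_n$ without centering constants).

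The one soft spot is the tightness step, and you have correctly identified it as the main obstacle, but your first proposed mechanism does not quite close it. An Ottaviani--L\'evy maximal inequality applied blockwise bounds $\P(\max_{j\leq m}|S_j| > \lambda b_m)$, which controls the ordinary modulus of continuity; but for $\alpha<2$ what must be excluded to control $w'(\cdot,\delta)$ is the event that \emph{two} increments of size comparable to $b_n$ fall in the same block of length $n\delta$, since $w'$ forgives only one large jump per partition interval. That estimate does not follow from a maximal inequality alone; it uses the regularly varying tail $\P(|X|>x)$ inherited from the domain-of-attraction hypothesis, giving a bound of order $(\delta\,\epsilon^{-\alpha})^2$ per block and hence $O(\delta)$ after summing over $1/\delta$ blocks. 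Your fallback via truncation into compensated small jumps plus finitely many large jumps is the clean standard argument, but note that when recombining you need the additional observation that the small-jump part converges (after compensation) to a process that is continuous --- indeed degenerate when $\alpha<2$ --- because addition is not $J_1$-continuous in general, only at pairs where one coordinate is continuous. With those two points made explicit the proof is complete; as it stands it is a correct sketch of the classical argument rather than a full proof, which is consistent with the paper's decision to cite the result rather than prove it.
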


We observe that the Mogul'ski\u\i{} estimate holds uniformly with respect to the starting point.

\begin{corollary}
\label{cor:mogulskiisup}
With  the same notation as Theorem \ref{thm:mogulskii}, we have
\[
  \lim_{n \to +\infty} \frac{a_n^\alpha}{n L^*(a_n)} \log \sup_{y \in \R} \P_y\left[ \frac{S_j}{a_n} \in [f(j/n), g(j/n)], 0 \leq j \leq n \right]
  = -C_* \int_0^1 \frac{ds}{(g(s)-f(s))^\alpha}.
\]
\end{corollary}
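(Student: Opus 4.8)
The plan is to reduce the supremum over starting points to the single-starting-point estimate of Theorem \ref{thm:mogulskii} by a sandwich argument, using only the monotonicity of the probabilities in the width of the tube together with the fact that the $a_n$-scale window is large enough to absorb a shift of the starting point. First I would fix $y \in \R$ and note that the event $\{S_j/a_n \in [f(j/n),g(j/n)],\, 0\le j\le n\}$ under $\P_y$ is the event $\{(S_j+y)/a_n \in [f(j/n),g(j/n)]\}$ under $\P_0=\P$, i.e. $\{S_j/a_n \in [f(j/n) - y/a_n,\, g(j/n) - y/a_n]\}$. If $|y|/a_n$ is not bounded, this window eventually fails to contain a point of positive width near $s=0$ (recall $f(0)<0<g(0)$), so $\P_y$ of the event is $0$ for $|y|$ large; hence the supremum is attained, up to an arbitrarily small loss, over $y$ in a compact set, and in fact — since the statement only concerns the logarithmic scale and the upper bound $\sup_y \ge$ the value at $y=0$ is already the matching lower bound — it suffices to prove the upper bound on $\sup_y$.

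For the upper bound, fix $\eta > 0$ small. For any $y$ with $|y|/a_n \le \eta$, the window $[f(j/n)-y/a_n,\, g(j/n)-y/a_n]$ is contained in $[\,\inf_s f(s) - \eta,\ \sup_s g(s) + \eta\,]$, but more usefully I would compare it to the fixed pair of continuous functions $f_\eta := f - \eta$ and $g_\eta := g + \eta$: indeed $[f(j/n) - y/a_n, g(j/n) - y/a_n] \subseteq [f_\eta(j/n), g_\eta(j/n)]$ whenever $|y|/a_n \le \eta$. Therefore, for all such $y$,
\[
  \P_y\!\left[\tfrac{S_j}{a_n} \in \left[f(\tfrac{j}{n}), g(\tfrac{j}{n})\right],\, 0\le j\le n\right]
  \le \P\!\left[\tfrac{S_j}{a_n} \in \left[f_\eta(\tfrac{j}{n}), g_\eta(\tfrac{j}{n})\right],\, 0\le j\le n\right],
\]
and for $|y|/a_n > \eta$ the left-hand side is $0$ once $n$ is large enough that $\eta > \max(-f(0), g(0))$ fails — wait, more precisely once $\eta$ exceeds nothing; rather, one picks $\eta$ small enough that $f(0) + \eta < 0 < g(0) - \eta$, and then for $|y|/a_n > \eta$ the window near $s=0$ excludes $S_0/a_n = 0$... the cleaner route: choose $\eta$ with $f(0) < -\eta < 0 < \eta < g(0)$, so that if $|y|/a_n > \eta$ then $0 = S_0/a_n \notin [f(0) - y/a_n, g(0) - y/a_n]$ for one sign of $y$; for the other sign one argues at a point near $s=0$ using continuity. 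In any case the set of $y$ contributing a nonzero probability is contained in $\{|y|/a_n \le \eta\}$ for every $\eta$ satisfying the above once $n$ is large, and on that set the displayed domination holds. Taking $\sup_y$, then $\log$, multiplying by $a_n^\alpha/(nL^*(a_n))$ and letting $n\to\infty$, Theorem \ref{thm:mogulskii} applied to the pair $(f_\eta, g_\eta)$ gives
\[
  \limsup_{n\to\infty} \frac{a_n^\alpha}{nL^*(a_n)} \log \sup_{y\in\R} \P_y[\,\cdots\,]
  \le -C_* \int_0^1 \frac{ds}{(g(s) - f(s) + 2\eta)^\alpha}.
\]
Letting $\eta \downarrow 0$ and invoking dominated (or monotone) convergence for the integral on the right — the integrand is bounded by $(\,\min_s(g-f)\,)^{-\alpha}$, which is finite since $g - f$ is continuous and strictly positive on the compact $[0,1]$ — yields the upper bound $-C_*\int_0^1 (g-f)^{-\alpha}\,ds$. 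Combined with the trivial lower bound from $y = 0$, this proves the corollary.

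The only mildly delicate point is the bookkeeping around $s = 0$ that shows $\P_y$ of the event vanishes for $|y|$ of order larger than $a_n$: this has to be done so that the supremum is genuinely controlled by the compact range of $y$, rather than merely bounded by a constant times a probability. This is handled by the requirement $f(0) < 0 < g(0)$ in Theorem \ref{thm:mogulskii}, together with continuity of $f,g$ at $0$, exactly as sketched above; everything else is the monotonicity of tube probabilities in the tube and a single application of the already-proved Mogul'ski\u\i{} estimate, so there is no substantial obstacle.
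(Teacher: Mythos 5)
There is a genuine gap in your reduction of the supremum to a compact (indeed, shrinking) range of starting points. You claim that, after fixing a small $\eta>0$, the set of $y$ contributing a nonzero probability is contained in $\{|y|\le \eta a_n\}$, and on that set you dominate the shifted tube by the fixed tube $[f-\eta,g+\eta]$. The first claim is false: the event $\{S_j/a_n\in[f(j/n),g(j/n)],\,j\le n\}$ under $\P_y$ requires only $y\in[a_nf(0),a_ng(0)]$ at $j=0$, so every $y$ with $y/a_n\in[f(0),g(0)]$ — a window of width $(g(0)-f(0))a_n$, not $2\eta a_n$ — can contribute. For such $y$ (say $y/a_n=g(0)/2$) the walk starts well inside the tube and the probability is not zero; in fact it has the same logarithmic rate as the $y=0$ case, so no argument ``at a point near $s=0$'' can kill it. Your hedged passage (``wait, more precisely\dots'') is exactly where the argument breaks: there is no $\eta\downarrow 0$ localization of the supremum. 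If you instead enlarge the dominating tube enough to absorb all admissible shifts, i.e.\ to $[f-g(0),\,g-f(0)]$, its width is $g-f+(g(0)-f(0))$, which does not tend to $g-f$, and you get a strictly weaker constant.

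The repair is a discretization of the shift, which is what the paper does: partition $[a_nf(0),a_ng(0)]$ into $K=\lceil (g(0)-f(0))/\delta\rceil$ intervals of length $\delta a_n$; for $y/a_n\in[x,x+\delta]$ the shifted tube is contained in the \emph{fixed} tube $[f-(x+\delta),\,g-x]$ of width $g-f+\delta$, to which Theorem \ref{thm:mogulskii} applies (the rate depends only on the width); the supremum is then bounded by a maximum over the finitely many $k\le K$, which preserves the $\limsup$ bound $-C_*\int_0^1(g(s)-f(s)+\delta)^{-\alpha}\,ds$, and one lets $\delta\to 0$ at the end. Your overall architecture (upper bound via tube enlargement plus the trivial lower bound at $y=0$) is the right one, but the single-$\eta$ enlargement must be replaced by this finite family of $\delta$-shifted enlargements.
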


\begin{proof}
Observe in a first time that if $y \not \in [a_n f(0), a_n g(0)]$, then
\[ \P_y\left[ \frac{S_j}{a_n} \in [f(j/n), g(j/n)], 0 \leq j \leq n \right] =0. \]
We now choose $\delta > 0$, and write $K = \ceil{\frac{g(0)-f(0)}{\delta}}$, we have
\[
  \sup_{y \in \R} \P_y\left[ \frac{S_j}{a_n} \in [f(j/n), g(j/n)], 0 \leq j \leq n \right] \leq \max_{k \leq K} \Pi_{f(0)+k\delta, f(0)+(k+1)\delta}(f,g),
\]
where
\begin{align*}
  \Pi_{x, x'}(f,g) &= \sup_{y \in [x a_n, x' a_n]} \P_y\left[ \frac{S_j}{a_n} \in [f(j/n), g(j/n)], 0 \leq j \leq n \right]\\
  &\leq \P\left[ \frac{S_j}{a_n} \in [f(j/n)-x', g(j/n)-x], 0 \leq j \leq n \right].
\end{align*}

Therefore, for all $k \leq K$, we have
\[
  \limsup_{n \to +\infty} \frac{a_n^\alpha}{n L^*(a_n)} \log \Pi_{f(0)+k\delta, f(0)+(k+1)\delta}(f,g) \leq - C_* \int_0^1 \frac{ds}{(g(s)-f(s) + \delta)^\alpha},
\]
which leads to
\[
  \limsup_{n \to +\infty} \frac{a_n^\alpha}{n L^*(a_n)} \log \sup_{y \in \R} \P\left[ \frac{S_j+y}{a_n} \in [f(j/n), g(j/n)], 0 \leq j \leq n \right]\\
  \leq  -C_* \int_0^1 \frac{ds}{(g(s)-f(s)+\delta)^\alpha}.
\]
Letting $\delta \to 0$ concludes the proof, as the lower bound is a direct consequence of Theorem \ref{thm:mogulskii}.
\end{proof}

Using an adjustment of the original proof of Mogul'ski\u\i{}, one can prove a similar estimate for enriched random walks. We set $(X_n,\xi_n)$ a sequence of i.i.d. random variables on $\R\times \R_+$, with $X_1$ in the domain of attraction of the stable random variable $Y$, such that $\P(Y\geq 0) \in (0,1)$. We denote by $S_n = S_0 + X_1 + \cdots + X_n$, which is a random walk in the domain of attraction of $Y$. The following estimate then holds.

\begin{lemma}
\label{lem:mogulskiiinf}
Let $(a_n) \in \R_+^\N$ be such that $\lim_{n \to +\infty} \frac{a_n}{b_n} = 0$. We set $E_n = \{ \xi_j \leq n, j \leq n\}$ and we assume that
\begin{equation}
  \label{eqn:spinelikeassumption}
  \lim_{n \to +\infty} \frac{a_n^\alpha}{L^*(a_n)} \P(\xi_1 \geq n) = 0.
\end{equation}
There exists $C_*>0$, given by \eqref{eqn:defcstar}, such that for any pair $(f,g)$ of continuous functions verifying $f<g$, for any $f(0)<x<y<g(0)$ we have
\[
  \lim_{n \to +\infty} \frac{a_n^\alpha}{n L^*(a_n)} \log \inf_{z \in [x a_n, y a_n]} \P_z\left( \frac{S_j}{a_n} \in \left[ f(j/n) ,g(j/n) \right], j \leq n, E_n\right)\\
  = -C_* \int_0^1 \frac{ds}{(g(s)-f(s))^\alpha}.
\]
\end{lemma}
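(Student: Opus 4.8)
The plan is to adapt the proof of Theorem \ref{thm:mogulskii} (Mogul'ski\u\i{}'s estimate) to accommodate both the extra condition $E_n$ and the restriction of the starting point to $[xa_n, ya_n]$. The key observation is that the event $E_n = \{\xi_j \leq n, j \leq n\}$ is, by assumption \eqref{eqn:spinelikeassumption}, asymptotically negligible on the relevant exponential scale, so it should not affect the rate. I would split the argument into an upper bound and a lower bound, since the two require different handling of $E_n$.

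\emph{Upper bound.} Here $E_n$ only helps (it is an intersection), so the supremum-free version suffices: for any $z$,
\[
  \P_z\left( \frac{S_j}{a_n} \in [f_{j/n}, g_{j/n}], j \leq n, E_n\right) \leq \P_z\left( \frac{S_j}{a_n} \in [f_{j/n}, g_{j/n}], j \leq n\right),
\]
and taking the infimum over $z \in [xa_n, ya_n]$, then applying Theorem \ref{thm:mogulskii} together with the uniformity afforded by the argument of Corollary \ref{cor:mogulskiisup} (one may in fact restrict the starting point to a sub-box, which can only decrease the probability but not the rate), gives
\[
  \limsup_{n \to +\infty} \frac{a_n^\alpha}{n L^*(a_n)} \log \inf_{z \in [xa_n, ya_n]} \P_z\left( \frac{S_j}{a_n} \in [f_{j/n}, g_{j/n}], j \leq n\right) \leq -C_* \int_0^1 \frac{ds}{(g(s)-f(s))^\alpha}.
\]

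\emph{Lower bound.} This is where \eqref{eqn:spinelikeassumption} is used. Fix a starting point $z \in [xa_n, ya_n]$. Then
\[
  \P_z\left( \frac{S_j}{a_n} \in [f_{j/n}, g_{j/n}], j \leq n, E_n\right) \geq \P_z\left( \frac{S_j}{a_n} \in [f_{j/n}, g_{j/n}], j \leq n\right) - \P(E_n^c),
\]
and $\P(E_n^c) \leq n\,\P(\xi_1 \geq n)$ by a union bound. By \eqref{eqn:spinelikeassumption}, $n\,\P(\xi_1 \geq n) = o\!\left(\frac{n L^*(a_n)}{a_n^\alpha}\right)$, whereas the lower bound in Theorem \ref{thm:mogulskii} shows the first term is $\exp\!\left(-(C_*+o(1)) \frac{n L^*(a_n)}{a_n^\alpha} \int_0^1 \frac{ds}{(g(s)-f(s))^\alpha}\right)$, which decays \emph{slower} than any fixed power of $n$ times $\frac{n L^*(a_n)}{a_n^\alpha}$ provided one first shrinks the corridor slightly: replacing $(f,g)$ by $(f+\eta, g-\eta)$ for small $\eta>0$ with $f(0)+\eta < x < y < g(0) - \eta$ still keeps the strip non-degenerate and only worsens the constant by a controlled amount, while the shrunk probability remains $e^{-O(n L^*(a_n)/a_n^\alpha)}$, dominating $n\,\P(\xi_1 \geq n)$. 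Hence the difference is still $\exp\!\left(-(C_* + o(1))\frac{n L^*(a_n)}{a_n^\alpha}\int_0^1 \frac{ds}{(g(s)-f(s)-2\eta)^\alpha}\right)$; taking the infimum over $z$ (uniform by the same box-splitting as in Corollary \ref{cor:mogulskiisup}) and then letting $\eta \to 0$ yields the matching lower bound.

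The main obstacle I anticipate is making the ``uniformity in the starting point'' rigorous when the starting point is constrained to $[xa_n, ya_n]$ rather than all of $\R$: one must check that restricting the initial value to a strictly interior sub-interval does not change the Mogul'ski\u\i{} rate, which requires re-examining the proof of Theorem \ref{thm:mogulskii} (it typically proceeds by a sub-multiplicativity/concatenation argument over blocks of length $o(n)$, and an interior start costs only a $o(n)$-length block to reach the bulk regime, hence is negligible on the $\frac{n L^*(a_n)}{a_n^\alpha}$ scale). A secondary subtlety is ensuring the condition $f(0) < x < y < g(0)$ leaves enough room to perform the $\eta$-shrinking uniformly; this is exactly why the hypothesis is stated with strict inequalities. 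Once these two points are handled, combining the upper and lower bounds gives the claimed limit.
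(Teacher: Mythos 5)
Your upper bound is fine, and your guiding intuition that $E_n$ is ``negligible at the exponential scale'' is correct, but the way you implement it in the lower bound does not work. You subtract the global union bound $\P(E_n^c)\le n\,\P(\xi_1\ge n)$ from the corridor probability. Hypothesis \eqref{eqn:spinelikeassumption} only gives $\P(\xi_1\ge n)=o\bigl(L^*(a_n)/a_n^\alpha\bigr)$, hence $n\,\P(\xi_1\ge n)=o\bigl(nL^*(a_n)/a_n^\alpha\bigr)$ --- that is, $o(1)$ times a quantity tending to $+\infty$ (recall $a_n=o(b_n)$ forces $a_n^\alpha/L^*(a_n)=o(n)$), so the union bound may itself tend to infinity. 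The corridor probability, on the other hand, is $\exp\bigl(-c\,nL^*(a_n)/a_n^\alpha\bigr)$, exponentially small in that same quantity; nothing in \eqref{eqn:spinelikeassumption} forces $n\,\P(\xi_1\ge n)\ll e^{-c\,nL^*(a_n)/a_n^\alpha}$, and the difference you write down is in general negative. Concretely, with $\alpha=2$, $L^*\equiv 1$, $a_n=n^{1/4}$ and $\P(\xi_1\ge n)=n^{-3/4}$, the hypothesis holds while $n\,\P(\xi_1\ge n)=n^{1/4}\to+\infty$ and the corridor probability is $e^{-c\sqrt n}$. Shrinking the corridor by $\eta$ does not repair this: the shrunk probability is still exponentially small, so the domination you claim never holds.

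The subtraction has to be performed at the scale where it is affordable, and this is exactly what the paper does. Cut $[0,n]$ into $K\sim nL^*(a_n)/(Aa_n^\alpha)$ blocks of length $r_n=\lfloor A a_n^\alpha/L^*(a_n)\rfloor$ and chain with the Markov property, forcing the walk into a fixed window at each block endpoint. On a single block the rescaled walk converges (Theorem \ref{thm:donsker}) to a stable L\'evy process on $[0,A]$, so the block corridor probability converges to a \emph{positive constant}, while the union bound on that block costs only $r_n\P(\xi_1\ge n)=A\tfrac{a_n^\alpha}{L^*(a_n)}\P(\xi_1\ge n)\to 0$ by \eqref{eqn:spinelikeassumption} --- this is precisely the scale at which the hypothesis is calibrated. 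Raising the per-block constant to the power $K$, then letting $A\to+\infty$ using \eqref{eqn:defcstar} for the L\'evy process, recovers the rate $-C_*/(g-f)^\alpha$ for constant $f,g$; a second, macroscopic concatenation handles general continuous $f,g$. Incidentally, this same block structure is what makes the infimum over $z\in[xa_n,ya_n]$ uniform, so the ``secondary subtlety'' you flag is resolved by the same device. As written, however, your lower bound has a genuine gap.
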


\begin{proof}
We assume in a first time that $f,g$ are two constant functions. Let $n \geq 1$, $f<x<y<g$ and $f<x'<y'<g$, we denote by
\begin{equation}
  \label{eqn:constants}
  P^{x',y'}_{x,y} (f,g) = \inf_{z \in [x,y]} \P_{za_n} \left( \frac{S_n}{a_n} \in [x',y'], \frac{S_j}{a_n} \in [f,g], j \leq n, E_n  \right).
\end{equation}
Let $A>0$ and $r_n = \floor{A \frac{a_n^\alpha}{L^*(a_n)}}$. We divide $[0,n]$ into $K = \floor{\frac{n}{r_n}}$ intervals of length $r_n$. For any $k \leq K$, we set $m_k = kr_n$ and $m_{K+1}=n$. Applying the Markov property at time $m_K, \ldots, m_1$, and restricting to trajectories which are, at any time $m_k$ in $[x' a_n, y' a_n]$, we have
\begin{equation}
  \label{eqn:betainf}
  P^{x',y'}_{x,y} (f,g) \geq \pi^{x',y'}_{x,y}(f,g) \left(\pi^{x',y'}_{x',y'}(f,g)\right)^{K},
\end{equation}
where we set $\pi^{x',y'}_{x,y}(f,g) = \inf_{z \in [x,y]} \P_{za_n} \left( \frac{S_{r_n}}{a_n} \in [x',y'], \frac{S_j}{a_n} \in [f,g], j \leq r_n, E_{r_n} \right)$.

Let $\delta > 0$ be chosen small enough such that $M = \ceil{\frac{y-x}{\delta}} \geq 3$. We observe easily that
\begin{align}
  \pi^{x',y'}_{x,y}(f,g) &\geq \min_{0 \leq m \leq M} \pi_{x+m\delta,x+(m+1)\delta}^{x',y'}(f, g)\nonumber\\
  &\geq \min_{0 \leq m \leq M} \pi^{x'-(m-1)\delta, y'-(m+1)\delta}_{x,x}(f-(m-1)\delta, g-(m+1)\delta).\label{eqn:alphainf}
\end{align}
Moreover, we have
\begin{align*}
  \pi^{x',y'}_{x,x}(f,g) &= \P_{xa_n}\left( \frac{S_{r_n}}{a_n} \in [x',y'], \frac{S_j}{a_n} \in [f,g], j \leq r_n, E_{r_n} \right)\\
  &\geq \P_{xa_n}\left( \frac{S_{r_n}}{a_n} \in [x',y'], \frac{S_j}{a_n} \in [f,g], j \leq r_n \right) - r_n\P(\xi_1 \geq n).
\end{align*}
By \eqref{eqn:spinelikeassumption}, $\lim_{n \to +\infty} r_n \P(\xi_1 \geq n) = 0$. Moreover, $r_n \sim A \frac{a_n^\alpha}{L^*(a_n)}$ and $X_1$ is in the domain of attraction of $Y$. Thus $\frac{S_{r_n}}{a_n}$ converges in law toward $A^\frac{1}{\alpha}Y$ as $n \to +\infty$. We apply Theorem \ref{thm:donsker}, the process $\left(\frac{S_\floor{r_n t/A}}{a_n}, t \in [0,A]\right)$ converges as $n \to +\infty$ under law $\P_{xa_n}$ to a stable Lévy process $(x+Y_t, t \in [0,A])$ such that $Y_A$ has the same law as $A^{1/\alpha}Y$. In particular
\[
  \liminf_{n \to +\infty} \pi^{x',y'}_{x,x}(f,g) \geq \P_x(Y_{A} \in (x',y'), Y_u \in (f,g), u \leq A).
\]

Using \eqref{eqn:alphainf}, we have
\begin{equation*}
  \liminf_{n \to +\infty} \pi^{x',y'}_{x,y}(f,g) \geq \min_{0 \leq m \leq M}\P_{x+m\delta}(Y_{A} \in (x'+\delta,y'-\delta), Y_u \in (f+\delta,g-\delta), u \leq A ).
\end{equation*}
As a consequence, recalling that $K \sim \frac{n L^*(a_n)}{A a_n^\alpha}$, \eqref{eqn:betainf} leads to
\begin{multline}
  \liminf_{n \to +\infty} \frac{a_n^\alpha}{n L^*(a_n)} \log P_{x,y}^{x',y'}(f,g)\\ \geq
  \frac{1}{A}\min_{0 \leq m \leq M} \log \P_{x'+m\delta}(Y_A \in (x'+\delta,y'-\delta), Y_u \in (f+\delta,g-\delta), u \leq A). \label{eqn:gammainf}
\end{multline}
By \cite[Lemma 1]{Mog74}, we have
\[
  \lim_{t \to +\infty} \frac{1}{t} \log \P_x(Y_t \in (x',y'), Y_s \in (f,g), s \leq t) = -\frac{C_*}{(g-f)^\alpha},
\]
where $C_*$ is defined by \eqref{eqn:defcstar}. Letting $A \to +\infty$ then $\delta \to 0$, \eqref{eqn:gammainf} yields
\begin{equation}
  \liminf_{n \to +\infty} \frac{a_n^\alpha}{n L^*(a_n)} \log P_{x,y}^{x',y'}(f,g) \geq - \frac{C_*}{(g-f)^\alpha}
  \label{eqn:firstestimate}
\end{equation}
which is the expected result when $f,g$ are two constants.

In a second time, we consider two continuous functions $f<g$. Let $f(0)<x<y<g(0)$. We set $h$ a continuous function such that $f<h<g$ and $h(0) = \frac{x+y}{2}$. Let $\epsilon>0$ such that $6 \epsilon \leq \inf_{t \in [0,1]} \min(g(t)-h(t),h(t)-f(t))$. We choose $A>0$ such that
\[ \sup_{|t-s| \leq \frac{2}{A}} |f(t)-f(s)| + |g(t)- g(s)|  + |h(t)-h(s)| \leq \epsilon. \]
and for $a \leq A$, we write $m_a = \floor{an/A}$ and $I_{a,A} = [f(a/A)+\epsilon,g(a/A)-\epsilon]$. We define $J_{0,A} = [x,y]$, and for $1 \leq a \leq A$, $J_{a,A} = [h(a/A)-\epsilon, h(a/A)+\epsilon]$. Applying the Markov property at times $m_{A-1}, \ldots, m_1$, we have
\begin{multline*}
  \inf_{z \in [x a_n, ya_n]} \P_z\left(\frac{S_j}{a_n} \in \left[ f(j/n) ,g(j/n) \right], j \leq n, E_n\right)\\
  \geq \prod_{a=0}^{A-1} \inf_{z \in J_{a,A}} \P_{za_n}\left(
    \frac{S_{m_{a+1}}}{a_n} \in J_{a+1,A}, \frac{S_j}{a_n} \in I_{a,A},j \leq m_{a+1}-m_a, E_{m_{a+1}-m_a} \right).
\end{multline*}
Therefore, using equation \eqref{eqn:firstestimate}, we have
\begin{multline*}
  \liminf_{n \to +\infty} \frac{a_n^\alpha}{nL^*(a_n)} \log \inf_{z \in [xa_n,ya_n]} \P_z\left( \frac{S_j}{a_n} \in \left[ f(j/n) ,g(j/n) \right] , j \leq n, E_n\right)\\
  \geq -\frac{1}{A}\sum_{a=0}^{A-1} C_*\frac{1}{(g(a/A)-f(a/A)-2\epsilon)^\alpha}.
\end{multline*}
As the upper bound is a direct consequence of Theorem \ref{thm:mogulskii}, we let $A \to +\infty$ and $\epsilon \to 0$ to conclude the proof.
\end{proof}

\section{Branching random walk with a barrier}
\label{sec:barrier}

Let $(\T,V)$ be a branching random walk with reproduction law $\calL$ satisfying the hypotheses of Theorem~\ref{thm:main}. We study in this section the asymptotic behaviour, as $n \to +\infty$ and $\epsilon \to 0$ of the quantity
\begin{equation}
  \label{eqn:defineRho}
  \rho(n,\epsilon) = \P\left( \exists |u| = n : \forall j \leq n, V(u_j) \geq - \epsilon j \right).
\end{equation}
The asymptotic behaviour of $\rho(\infty, \epsilon)$ has been studied by Gantert, Hu and Shi in \cite{GHS11} for a branching random walk with a spine in the domain of attraction of a Gaussian random variable. They studied the asymptotic behaviour of $\rho(n,\epsilon)$ for $\epsilon \approx \theta n^{-2/3}$. Using the same arguments, we obtain sharp estimates on the asymptotic behaviour of $\rho(n,\epsilon)$ for $\epsilon \approx \theta \Lambda(n) n^{-\frac{\alpha}{\alpha + 1}}$, where $\Lambda$ is a well-chosen slowly varying function.

We apply the spinal decomposition and the Mogul'ski\u\i{} estimate to compute the number of individuals that stay at any time $k \leq n$ between curves $a_n f(k/n)$ and $a_n g(k/n)$, for an appropriate choice of $(a_n)$, $f$ and $g$. We note that
\begin{align*}
  \E\left[ \sum_{|u|=n} \ind{V(u_j) \in [a_n f(j/n),a_n g(j/n)], j \leq n} \right]
  &= \E\left[ e^{-S_n} \ind{S_j \in [a_n f(j/n), a_n g(j/n), j \leq n} \right]\\
  &\approx e^{-a_n g(1)} \P\left(S_j \in [a_n f(j/n), a_n g(j/n)], j \leq n \right)\\
  &\approx \exp\left(-a_n g(1) - \frac{n L^*(a_n)}{a_n^\alpha} C_* \int_0^1 \frac{ds}{(g(s) -f(s))^\alpha} \right).
\end{align*}
This informal computation hints that to obtain tight estimates, it is appropriate to choose a sequence $(a_n)$ satisfying $a_n \sim_{n \to +\infty} \frac{n L^*(a_n)}{a_n^\alpha}$, and functions $f$ and $g$ verifying
\begin{equation}
  \label{eqn:equadiff}
  \forall t \in [0,1], g(t) + C_* \int_0^t \frac{ds}{(g(s) - f(s))^\alpha} = g(0).
\end{equation}
However, instead of solving explicitly $g'(t) = - C_* (g(t) + \theta t)^{-\alpha}$ as a function of $(t,\theta)$, we use approximate solutions for \eqref{eqn:equadiff}.

For $n \in \N$, we define
\begin{equation}
  \label{eqn:defineAn}
  a_n = \inf\left\{ x \geq 0 : \frac{x^{\alpha + 1}}{L^*(x)} = n \right\}.
\end{equation}
and we introduce the function
\begin{equation}
  \label{eqn:definePhi}
  \Phi : \begin{array}{rcl}
  (0,+\infty)& \longrightarrow &\R\\
  \lambda &\longmapsto & \frac{C_*}{\lambda^\alpha} - \frac{\lambda}{\alpha + 1}.
  \end{array}
\end{equation}
Note that $\Phi$ is a $\calC^\infty$ strictly decreasing function on $(0,+\infty)$, that admits a well-defined inverse $\Phi^{-1}$. The main result of the section is the following.
\begin{theorem}
\label{thm:asymptotic}
Under the assumptions of Theorem \ref{thm:main}, for any $\theta > 0$ we have
\[
  -\frac{C_*^\frac{1}{\alpha}}{\theta^\frac{1}{\alpha}} \leq \liminf_{n \to +\infty} \frac{1}{a_n} \log \rho\left(n, \theta \frac{a_n}{n} \right) \leq \limsup_{n \to +\infty} \frac{1}{a_n} \log \rho\left(n, \theta \frac{a_n}{n} \right) \leq -\Phi^{-1}(\theta).
\]
\end{theorem}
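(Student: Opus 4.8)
The strategy is the standard first-moment/second-moment dichotomy for branching random walks with a barrier, combined with the Mogul'ski\u\i{} estimates established in Section \ref{sec:lemmas}. Throughout I write $\epsilon_n = \theta a_n/n$, and I will compare the event $\{V(u_j)\ge -\epsilon_n j,\ j\le n\}$ with the event that the trajectory stays inside a tube of the form $[a_n f(j/n),a_n g(j/n)]$ for well-chosen affine-type functions $f<g$. The key scaling facts I will use repeatedly are that, by the definition \eqref{eqn:defineAn} of $a_n$, one has $a_n \sim n L^*(a_n)/a_n^\alpha$, so that $n L^*(a_n)/a_n^\alpha$ and $a_n$ are of the same order; hence the exponential rate in the Mogul'ski\u\i{} estimate (Theorem \ref{thm:mogulskii}, Corollary \ref{cor:mogulskiisup}) is $a_n$ up to constants, matching the normalisation $\frac{1}{a_n}\log\rho(n,\epsilon_n)$ in the statement.

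\textbf{Upper bound.} For the upper bound $\limsup \frac1{a_n}\log\rho(n,\epsilon_n)\le -\Phi^{-1}(\theta)$, I would use the first-moment method. By Markov's inequality,
\[
  \rho(n,\epsilon_n) \le \E\Big[\sum_{|u|=n}\ind{V(u_j)\ge -\epsilon_n j,\ j\le n}\Big]
   = \E\big[e^{-S_n}\ind{S_j\ge -\epsilon_n j,\ j\le n}\big],
\]
by the many-to-one Lemma \ref{lem:manytoone}. To extract the exponential rate I slice according to the value of $S_n/a_n$: writing $S_n \ge -\epsilon_n n = -\theta a_n$ on the event, I decompose $[-\theta a_n,\infty)$ into slabs $[\lambda a_n,(\lambda+\delta)a_n)$ and bound $e^{-S_n}\le e^{-\lambda a_n}$ on each slab, times the probability that $S$ stays above the line $-\epsilon_n j$ and reaches the slab at time $n$. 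Rescaling by $a_n$, the constraint $S_j/a_n \ge -\theta j/n$ is a moving lower barrier with no upper barrier; to apply Mogul'ski\u\i{} I cap the walk from above at level $M a_n$ for large $M$ (the contribution of trajectories exceeding $Ma_n$ is controlled by \eqref{eqn:maxdis}-type bounds, or simply absorbed since the exponential weight $e^{-S_n}$ makes large values cheap). On the capped event Corollary \ref{cor:mogulskiisup} gives the rate $-C_*\frac{nL^*(a_n)}{a_n^\alpha}\int_0^1 \frac{ds}{(g(s)-f(s))^\alpha} \sim -C_* a_n \int_0^1\cdots$. Optimising over the slab location $\lambda$ (i.e. over the endpoint $g(1)=\lambda$) and over the shape of the tube — using that the optimal profile satisfies \eqref{eqn:equadiff}, or more simply plugging in the linear ansatz $g(t)-f(t) = \lambda + \theta t$ — one is led to minimise $\lambda + C_*\int_0^1 (\lambda+\theta t)^{-\alpha}\,dt$ over $\lambda>0$. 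A direct computation identifies the minimiser and the minimal value with $\Phi^{-1}(\theta)$, via the definition \eqref{eqn:definePhi} of $\Phi$ (the Euler–Lagrange / stationarity condition for $\lambda$ is exactly $\Phi'=0$ at the right point). Letting $\delta\to 0$ then $M\to\infty$ closes this half.

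\textbf{Lower bound.} For $\liminf \frac1{a_n}\log\rho(n,\epsilon_n)\ge -C_*^{1/\alpha}\theta^{-1/\alpha}$, I would use a truncated second-moment (Paley–Zygmund) argument on the number $Z_n$ of individuals $u$ with $|u|=n$ whose whole path stays inside a narrow tube $[a_n f(j/n),a_n g(j/n)]$ contained in $\{y\ge -\epsilon_n j\}$, with the tube chosen so that the first-moment rate is (close to) $-C_*^{1/\alpha}\theta^{-1/\alpha}$ — concretely a tube of roughly constant width $w$ just above the barrier, for which $\int_0^1 w^{-\alpha}\le w^{-\alpha}$ and the weight loss $e^{-a_n g(1)}$ is negligible if the tube hugs the barrier; optimising the constant-width tube gives width $w=(C_*/\theta)^{1/\alpha}$ and rate $-2C_*^{1/\alpha}\theta^{-1/\alpha}$... so in fact the tube must be allowed to drift, and I take the tube between the barrier $-\epsilon_n j$ and a parallel curve, giving rate $-C_*^{1/\alpha}\theta^{-1/\alpha}$ after optimisation. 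The first moment $\E[Z_n]$ is estimated from below exactly as above but now using the \emph{lower} Mogul'ski\u\i{} bound, i.e. Lemma \ref{lem:mogulskiiinf}, where the auxiliary variables $\xi$ encode the local reproduction weights along the spine; hypothesis \eqref{eqn:integrability} is precisely what guarantees \eqref{eqn:spinelikeassumption} for the relevant $\xi_j = \log\big(\sum_{\ell'\in L}e^{\ell'-V}\big)$, so that Lemma \ref{lem:mogulskiiinf} applies on the event $E_n$. For the second moment $\E[Z_n^2]$ one uses the many-to-two (spine-decomposition) formula: $\E[Z_n^2]$ splits over the generation $k$ at which the two chosen particles diverge, and is bounded by $\E[Z_n]$ times $\sum_k$ (an expected number of tube-staying descendants from a branch point), which by the many-to-one lemma and the Markov property is controlled by $\E[Z_n]$ times a constant (the tube being thin, the extra branch contributes a geometric-type series that is summable, using \eqref{eqn:critical1} to count siblings). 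Paley–Zygmund then gives $\P(Z_n\ge 1)\ge \E[Z_n]^2/\E[Z_n^2]\ge c\,\E[Z_n]$, and since $\rho(n,\epsilon_n)\ge\P(Z_n\ge 1)$ the lower bound on the rate follows.

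\textbf{Main obstacle.} The delicate point is the second-moment control: one must show that, after the two spine particles separate at some time $k$, the expected number of pairs does not blow up, which requires a uniform (in $k$ and in starting point) bound of the shape ``expected number of descendants from a node at height $\sim a_n h(k/n)$ that stay in the remaining tube'' $\lesssim$ the corresponding single-particle probability, \emph{and} summability of these over $k$. Because the tube has width $o(a_n)$ and the walk is $\alpha$-stable (heavier increments than Gaussian), the sum $\sum_k$ of re-entry probabilities into a thin tube must be shown to converge; this is where the slow-variation bookkeeping with $L^*$ and the precise choice of tube width interact, and where I expect the bulk of the technical work to lie. A secondary subtlety is making the informal approximations in \eqref{eqn:equadiff} rigorous — i.e. replacing the exact ODE solution by piecewise-linear sub/super-tubes and controlling the error, which is routine but must be done carefully to match the constants $\Phi^{-1}(\theta)$ (upper) and $C_*^{1/\alpha}\theta^{-1/\alpha}$ (lower) claimed in the statement.
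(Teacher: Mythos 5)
Your lower bound follows essentially the paper's route (a tube of constant width $\lambda a_n$ parallel to the barrier, spinal decomposition, Lemma \ref{lem:mogulskiiinf} applied to the enriched walk with $\xi(w_j)=\log\sum_{v\in\Omega(w_j)}e^{V(v)-V(w_j)}$, then Cauchy--Schwarz), and you correctly identify that \eqref{eqn:integrability} is what yields \eqref{eqn:spinelikeassumption}. One claim in it is wrong, though: the second moment is \emph{not} of order $\E[Z_n]$ times a constant, so Paley--Zygmund does not give $\P(Z_n\ge 1)\ge c\,\E[Z_n]$ (that would yield a survival probability of order $1$ at the optimal width). Decomposing over the branching time with the spine, each sibling $u$ of $w_k$ contributes $\E(\Lambda(u)\mid\calG)\lesssim e^{V(u)}e^{\theta a_n}\sup_z\P_z(S_j\in I^{(n)}_{k+j},\,j\le n-k)$, and after summing over $k$ one gets $\E[X_n^2]\le e^{a_n(\lambda+2\theta-2C_*/\lambda^\alpha+\delta)}$ while $\E[X_n]\ge e^{a_n(\theta-C_*/\lambda^\alpha)}$; the ratio $\E[X_n]^2/\E[X_n^2]$ is then $e^{-a_n(\lambda+\delta)}$, and this exponential loss \emph{is} the answer. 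Your final constant is right but the accounting that is supposed to produce it is not.

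The upper bound, however, has a genuine gap. Markov's inequality applied to the number of particles alive at time $n$ that stayed above the barrier cannot work: by the many-to-one lemma that expectation equals $\E[e^{-S_n}\ind{S_j\ge-\theta a_n j/n,\,j\le n}]$, and restricting to paths ending near $-\theta a_n$ inside a wide tube shows it is of order $e^{\theta a_n(1-o(1))}$ --- exponentially \emph{large} for every $\theta>0$. Your repair (slice by the terminal value $S_n\approx\lambda a_n$ and cap the path at $Ma_n$) does not fix this: the weight $e^{-S_n}$ sees only the endpoint, so a path that rises to $Ma_n$ at an intermediate time and returns to $-\theta a_n$ is neither penalised by the weight nor, as $M\to+\infty$, by the Mogul'ski\u\i{} cost, and the bound degenerates back to $e^{\theta a_n}$. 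Equally, the probability of ending at $\lambda a_n$ while staying above the barrier is \emph{not} $\exp(-C_*a_n\int_0^1(\lambda+\theta s)^{-\alpha}ds)$, since such paths need not stay below $\lambda a_n$. The missing idea is to count first crossings of a well-chosen \emph{decreasing} upper curve $g(t)=-\theta t+\lambda(1-t)^{1/(\alpha+1)}$ with $\lambda=\Phi^{-1}(\theta)$: since $g(1)=-\theta$, the tube closes at time $n$ and every survivor has an ancestor crossing $a_ng(\cdot/n)$ at some $k\le n$, so $\rho\le\E[Y_n]$ where $Y_n$ counts first crossings; the many-to-one weight $e^{-a_ng(k/n)}$ then enters at the crossing time, and $g$ is calibrated (via \eqref{eqn:equadiff} and the definition of $\Phi$) so that $-g(k/n)-C_*\int_0^{k/n}(g(s)+\theta s)^{-\alpha}ds\le-\lambda$ uniformly in $k$. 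Without this stopping-line device the first-moment method does not close.
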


\begin{remark}
For any $\mu>0$ we have $a_\floor{\mu n} \sim_{n \to +\infty} \mu^{\frac{1}{\alpha + 1}} a_n$, by inversion of regularly varying functions. Consequently, Theorem \ref{thm:asymptotic} implies that for any $\theta > 0$,
\begin{multline}
  \label{eqn:encadrement}
  -1 \leq \liminf_{n \to +\infty} \frac{1}{a_n} \log \rho\left( \floor{(\theta/C_*)^\frac{\alpha + 1}{\alpha} n}, C_* \frac{a_n}{n} \right)\\ \leq \limsup_{n \to +\infty} \frac{1}{a_n} \log \rho \left( \floor{(\theta/C_*)^\frac{\alpha + 1}{\alpha} n}, C_* \frac{a_n}{n} \right) \leq -\frac{\theta^{\frac{1}{\alpha}}\Phi^{-1}(\theta)}{C_*^{\frac{1}{\alpha}}} .
\end{multline}
As $\lim_{\theta \to +\infty} \theta^{\frac{1}{\alpha}} \Phi^{-1}(\theta) = C_*^\frac{1}{\alpha}$, this leads to
\begin{equation}
  \label{eqn:encadrementAsymptotic}
  \lim_{h \to +\infty} \liminf_{n \to +\infty}\frac{1}{a_n} \log \rho\left( \floor{h n}, C_* \frac{a_n}{n} \right) = \lim_{h \to +\infty} \limsup_{n \to + \infty}\frac{1}{a_n} \log \rho\left( \floor{h n}, C_* \frac{a_n}{n} \right) = -1.
\end{equation}
\end{remark}

To prove Theorem \ref{thm:asymptotic}, we prove separately an upper bound in Lemma~\ref{lem:upperbound} and the lower bound in Lemma~\ref{lem:lowerbound}. The upper bound is obtained by computing the number of individuals that stay above the line of slope $-\theta \frac{a_n}{n}$ during $n$ units of time.
\begin{lemma}
\label{lem:upperbound}
Under the assumptions of Theorem \ref{thm:main}, for all $\theta > 0$ we have
\[
  \limsup_{n \to +\infty} \frac{1}{a_n} \log \rho\left(n, \theta \frac{a_n}{n} \right) \leq -\Phi^{-1}(\theta).
\]
\end{lemma}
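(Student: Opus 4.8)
The plan is to bound $\rho(n, \theta a_n/n)$ from above by a first-moment (many-to-one) computation: the event that some individual $u$ with $|u|=n$ satisfies $V(u_j) \geq -\theta \tfrac{a_n}{n} j$ for all $j \leq n$ is contained in the event that there exists such a $u$ staying in a thin tube above that line. More precisely, I would fix a large integer $A$, partition $[0,1]$ into $A$ equal pieces, and on the $a$-th piece trap the walk between the killing line $-\theta t$ (rescaled by $a_n$) and a slowly decreasing upper barrier $g$, where $g$ is a step-function approximation to a solution of the differential inequality coming from \eqref{eqn:equadiff}. Concretely, set $f(t) = -\theta t$ and let $g$ be piecewise constant with $g \equiv g_a$ on $[a/A,(a+1)/A)$, chosen so that $g_0$ is close to $\Phi^{-1}(\theta)$ and the increments of $g$ track the required decay $C_* \int (g_s-f_s)^{-\alpha}\,ds$. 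The point of the choice $a_n$ in \eqref{eqn:defineAn}, namely $a_n^{\alpha+1}/L^*(a_n) = n$, is exactly that $a_n \sim n L^*(a_n)/a_n^\alpha$, so that the exponential cost $e^{-a_n g(1)}$ from the $e^{-S_n}$ weight and the probabilistic cost $\exp(-\tfrac{nL^*(a_n)}{a_n^\alpha} C_* \int_0^1 (g_s-f_s)^{-\alpha}ds)$ from Mogul'ski\u\i{} are of the same order $e^{-a_n(\cdots)}$ and combine additively in the exponent.

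The key steps, in order, are as follows. First, write
\[
  \rho\left(n, \theta \tfrac{a_n}{n}\right) \leq \E\left[ \sum_{|u|=n} \ind{V(u_j) \geq -\theta \frac{a_n}{n} j,\ j \leq n} \right]
\]
by Markov's inequality. Second, decompose according to which tube the trajectory $(V(u_j))_{j\le n}$ lies in: intersect with the event that $V(u_j) \leq a_n g(j/n)$ for all $j$ — this is legitimate because if $V(u_n) > a_n g(1)$ then $u$ would have had to cross the upper barrier at some point, and one handles that overflow by a union over the first crossing time, each contribution being controlled by the same many-to-one estimate with a shifted barrier (this is the standard peeling argument). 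Third, apply Lemma~\ref{lem:manytoone} to get
\[
  \E\left[ e^{-S_n} \ind{S_j \in [a_n f(j/n), a_n g(j/n)],\ j \leq n} \right] \leq e^{-a_n g(1)} \P\left( \tfrac{S_j}{a_n} \in [f(j/n), g(j/n)],\ j \leq n \right),
\]
using $S_n \geq a_n g(1)$ is false — rather $S_n \leq a_n g(1)$ on the event, so we bound $e^{-S_n} \leq e^{-a_n f(1)}$?—wait, one must be careful: on the event $S_n \in [a_n f(1), a_n g(1)]$ we only get $e^{-S_n} \le e^{-a_n f(1)} = e^{\theta a_n}$, which is the wrong sign. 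The resolution, and the genuine content of the argument, is that one does not keep a single tube of fixed width but lets the \emph{upper} barrier descend so that at the final time the walk is forced down near $f(1)$; equivalently, one runs the peeling so that the dominant contribution has $S_n$ close to $a_n g(1)$ with $g(1)$ chosen as large as the barrier constraint permits, and the $A$-step approximation is designed so that $g(1) - C_* \int_0^1 (g_s - f_s)^{-\alpha} ds \to \Phi^{-1}(\theta)$ as $A \to \infty$ and the mesh refines. Fourth, apply Theorem~\ref{thm:mogulskii} (or Corollary~\ref{cor:mogulskiisup} for the uniform-in-start-point version needed after peeling) on each of the $A$ blocks with $a_n$ in place of the theorem's $a_n$; since $a_n/b_n \to 0$ (because $a_n = o(b_n)$ follows from comparing $x^{\alpha+1}/L^*$ with $x^\alpha/L^*$), the hypotheses hold, and one gets the exponential rate $-C_* \sum_a \tfrac{1}{A}(g_a - f_a)^{-\alpha} \cdot \tfrac{n L^*(a_n)}{a_n^\alpha}$, which equals $-a_n C_* \int_0^1(g_s-f_s)^{-\alpha}\,ds$ up to the block-approximation error. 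Fifth, optimize over the choice of $g_0$: the constraint linking $g_0$ to the integral is precisely \eqref{eqn:equadiff}, whose "best" constant solution gives $g_0 = \Phi^{-1}(\theta)$ by definition of $\Phi$ in \eqref{eqn:definePhi} (indeed with $f_s = -\theta s$ and a constant-over-block ansatz the balance $g_0 = C_*/\lambda^\alpha - \lambda/(\alpha+1)$ at the relevant scale $\lambda$ reproduces $\Phi$). Finally, let $n \to \infty$, then the mesh $\to 0$, then $A \to \infty$, and collect the error terms to obtain $\limsup \tfrac{1}{a_n}\log \rho(n, \theta a_n/n) \leq -\Phi^{-1}(\theta)$.

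The main obstacle I expect is bookkeeping the interplay between the two scales and getting the \emph{sign} of the exponent right: one must arrange the upper barrier $g$ to decrease at exactly the rate that makes $-g(1)$ (from the martingale weight $e^{-S_n}$) combine \emph{constructively} with $-C_*\int (g_s-f_s)^{-\alpha}$ (from small deviations), and verify that the optimal trade-off is governed by $\Phi^{-1}$. This requires checking that (i) $a_n = o(b_n)$ so Mogul'ski\u\i{} applies with normalization $a_n$, (ii) the peeling over the first up-crossing of the upper barrier contributes only lower-order terms (each shifted-barrier term is smaller by a factor that is summable, using the monotonicity of $\Phi$ and that moving the barrier up only increases the rate), and (iii) the discretization errors $\sup_{|t-s|\le 1/A}|g_t - g_s|$ etc. vanish in the iterated limit. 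A secondary technical point is that the many-to-one lemma produces $\E_a$-expectations with the walk $S$ of law \eqref{eqn:spinal_law}, and one needs the $o(b_n)$ fluctuation regime to be the relevant one — which is guaranteed by the choice \eqref{eqn:defineAn} — rather than a large-deviation regime; this is where assumption \eqref{eqn:integrability} implicitly enters through the validity of replacing the enriched walk by the plain walk, though for the pure upper bound here the plain Mogul'ski\u\i{} estimate of Theorem~\ref{thm:mogulskii} suffices.
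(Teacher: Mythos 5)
Your overall strategy --- first moment via the many-to-one lemma, a tube between the killing line $f(t)=-\theta t$ and a descending upper barrier $g$, a decomposition over the first crossing of $g$, blockwise small-deviation estimates via Corollary~\ref{cor:mogulskiisup}, and an optimization producing $\Phi^{-1}(\theta)$ --- is exactly the paper's. But there is a genuine gap at the very step you flag as problematic, and your proposed resolution is not correct as written. The correct mechanism is: choose $g$ with $g(1)=f(1)=-\theta$, so the tube closes at time $n$ and \emph{every} surviving particle must have a first time $k\le n$ at which $V(u_k)\ge a_n g(k/n)$ while $V(u_j)\in[a_nf(j/n),a_ng(j/n)]$ for $j<k$. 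These first-crossing events are therefore the \emph{entire} bound, not a ``lower-order overflow handled by peeling'' as you assert; and it is precisely the condition $S_k\ge a_n g(k/n)$ at the crossing time that gives the weight $e^{-S_k}\le e^{-a_n g(k/n)}$ the right sign. Combined with the Mogul'ski\u\i{} cost of staying in the tube up to time $k\approx tn$, the exponent is $-a_n\bigl(g(t)+C_*\int_0^t (g_s-f_s)^{-\alpha}\,ds\bigr)$ --- note the plus sign: your display ``$g(1)-C_*\int_0^1$'' has the wrong sign, and by \eqref{eqn:equadiff} it is $g(0)$, not $g(1)$, that should equal $\Phi^{-1}(\theta)$. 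Your claim that ``the dominant contribution has $S_n$ close to $a_n g(1)$'' is also off: with $g(1)=-\theta$ that term carries the weight $e^{\theta a_n}$ and is rescued only by the tube probability; the binding term in the maximization over crossing times $t$ is at $t=0$, where the bound is exactly $e^{-a_n g(0)}$.

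The second missing ingredient is the choice of $g$ and the verification that $\sup_{t\in[0,1]}\bigl(-g(t)-C_*\int_0^t(g_s-f_s)^{-\alpha}ds\bigr)=-\Phi^{-1}(\theta)$. The paper does not approximate the exact solution of \eqref{eqn:equadiff} (for which one would separately have to prove that the tube closes by $t=1$, otherwise the crossing decomposition is not exhaustive and the in-tube remainder with its bad sign reappears). It takes the explicit barrier $g(t)=-\theta t+\lambda(1-t)^{\frac{1}{\alpha+1}}$, computes $C_*\int_0^t(g_s-f_s)^{-\alpha}ds=C_*(\alpha+1)\lambda^{-\alpha}\bigl[1-(1-t)^{\frac{1}{\alpha+1}}\bigr]$ in closed form, and observes that with $\lambda=\Phi^{-1}(\theta)$ the function $t\mapsto\theta t-(\alpha+1)\Phi(\lambda)\bigl[1-(1-t)^{\frac{1}{\alpha+1}}\bigr]$ is concave with derivative $\theta-\Phi(\lambda)=0$ at $t=0$, hence nonpositive, giving the bound $-\lambda$. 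Your ``piecewise constant $g$ tracking the ODE'' could likely be made to work, but points (a) the tube must provably close by time $1$ and (b) the explicit maximization over the crossing time are not addressed, and they are the substance of the proof.
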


\begin{proof}
Let $\theta > 0$ and $\lambda > 0$, we set $g : t \mapsto -\theta t + \lambda(1-t)^\frac{1}{\alpha + 1}$. For $j \leq n$, we introduce the intervals
\[
  I^{(n)}_j = \left[ - \theta a_n j/n, a_n g(j/n) \right].
\]
As $I^{(n)}_n = \{g(1)a_n\}$, an individual that stays above the curve of slope $-\theta a_n/n$ crosses at some time $k \leq n$ the line $g(./n)a_n$, therefore
\begin{align*}
  \rho\left(n,\theta \frac{a_n}{n}\right) &= \P\left( \exists |u| = n : \forall j \leq n, V(u_j) \geq -\theta a_n \frac{j}{n} \right)\\
  &\leq \P\left( \exists |u| \leq n : V(u) \geq a_n g(|u|/n), V(u_j) \in I^{(n)}_j, j < |u| \right).
\end{align*}
Thus, setting
\[
  Y_n = \sum_{|u| \leq n} \ind{V(u) \geq a_n g(|u|/n)} \ind{V(u_j) \in I^{(n)}_j, j < |u|},
\]
by the Markov inequality we have $\rho\left(n,\theta \frac{a_n}{n}\right) \leq \E(Y_n)$. Applying Lemma \ref{lem:manytoone}, we have
\begin{align*}
  \E(Y_n) &= \sum_{k=1}^n \E\left[ \sum_{|u|=k} \ind{V(u_j) \in I^{(n)}_j, j < k} \ind{V(u) \geq a_n g(k/n)} \right]\\
  &= \sum_{k=1}^n \E\left[ e^{-S_k} \ind{S_j \in I^{(n)}_j, j < k} \ind{S_k \geq a_n g(k/n)} \right]\\
  &\leq \sum_{k=1}^n e^{-g(k/n)a_n} \P\left( S_j \in I^{(n)}_j, j < k\right).
\end{align*}
Let $A \in \N$, we set $m_a = \floor{na/A}$ and $g_{a,A} = \inf_{s \in [\frac{a-1}{A},\frac{a+2}{A}]} g(s)$, we have
\begin{equation*}
  \E(Y_n) \leq \sum_{a=0}^{A-1} \sum_{k=m_a+1}^{m_{a+1}} e^{-g(k/n)a_n} \P\left( S_j \in I^{(n)}_j, j < k\right)
   \leq n \sum_{a=0}^{A-1} e^{-g_{a,A} a_n} \P\left( S_j \in I^{(n)}_j, j \leq m_a \right).
\end{equation*}
Therefore, by Corollary \ref{cor:mogulskiisup}, we have
\begin{align*}
  \limsup_{n \to +\infty} \frac{1}{a_n} \log \E(Y_n) & \leq \max_{a \leq A-1} \left( - g_{a,A} - C_* \int_0^\frac{a}{A} \frac{ds}{(g(s) + \theta s)^\alpha} \right)\\
  &\leq \max_{a \leq A-1} \left( - g_{a,A} - \frac{C_*(\alpha + 1)}{\lambda^\alpha}\left[ 1 - (1 - a/A)^\frac{1}{\alpha + 1} \right] \right).
\end{align*}
Letting $A \to +\infty$, as $g$ is uniformly continuous, we have
\begin{align*}
  \limsup_{n \to +\infty} \frac{1}{a_n} \log \rho\left(n, \theta \frac{a_n}{n} \right) &\leq \sup_{t \in [0,1]} \left\{ \theta t - \lambda (1-t)^\frac{1}{\alpha + 1} - \frac{C_*(\alpha + 1)}{\lambda^\alpha}\left[ 1 - (1 - t)^\frac{1}{\alpha + 1} \right]\right\}\\
  &\leq - \lambda + \sup_{t \in [0,1]} \left\{ \theta t - (\alpha + 1)\Phi(\lambda)\left[ 1 - (1 - t)^\frac{1}{\alpha + 1} \right] \right\}.
\end{align*}

Note that $t \mapsto  1 - (1 - t)^\frac{1}{\alpha + 1}$ is a convex function with slope $\frac{1}{\alpha + 1}$ at $t=0$. Therefore, if we choose $\lambda = \Phi^{-1}(\theta)$, the function $t \mapsto \theta t - (\alpha + 1)\Phi(\lambda)\left[ 1 - (1 - t)^\frac{1}{\alpha + 1} \right]$ is concave and decreasing. As a consequence
\[
  \limsup_{n \to +\infty} \frac{1}{a_n} \log \rho\left(n, \theta \frac{a_n}{n} \right) \leq - \lambda,
\]
which concludes the proof.
\end{proof}

To obtain a lower bound, we bound from below the probability for an individual to stay between two given curves, while having not too many children. To do so, we compute the first two moments of the number of such individuals, and apply the Cauchy-Schwarz inequality to conclude.

\begin{lemma}
\label{lem:lowerbound}
Under the assumptions of Theorem \ref{thm:asymptotic}, for all $\theta > 0$ we have
\[
  \liminf_{n \to +\infty} \frac{1}{a_n} \log \rho\left(n, \theta \frac{a_n}{n} \right) \geq - \frac{C_*^\frac{1}{\alpha}}{\theta^\frac{1}{\alpha}}.
\]
\end{lemma}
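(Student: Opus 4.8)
The plan is to use the standard second moment (Cauchy--Schwarz) method to bound $\rho(n,\theta a_n/n)$ from below. Fix $\theta>0$, set $\lambda = (C_*/\theta)^{1/\alpha}$, and choose two curves $f<g$ bracketing the line of slope $-\theta$: the lower one $f(t) = -\theta t$ and an upper one $g$ that starts at height $\lambda$ at $t=0$, decreases, and is engineered so that the Mogul'ski\u\i{} cost $C_*\int_0^1 (g_s-f_s)^{-\alpha}\,ds$ matches the exponential weight $e^{-S_n}$ in the many-to-one formula — informally, so that $g_s - f_s$ stays of constant order $\lambda$ and the final height $g(1)$ is negligible compared with the entropy term, up to a factor that optimizes to $\lambda = (C_*/\theta)^{1/\alpha}$. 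Concretely one takes an interval-valued tube $I^{(n)}_j = [a_n f(j/n) + (\text{small}), a_n g(j/n)]$ and counts individuals $u$ with $|u|=n$ whose whole ancestral path stays in this tube \emph{and} which additionally satisfy a restriction on the number of children along the path (this is where the enriched walk $(X_n,\xi_n)$ and the event $E_n$ from Lemma~\ref{lem:mogulskiiinf} enter, with $\xi_j = \log(\sum_{\ell}e^{\ell})$ evaluated along the spine, and assumption \eqref{eqn:integrability} guaranteeing \eqref{eqn:spinelikeassumption}).

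The key steps, in order. First I would define $Z_n = \sum_{|u|=n} \ind{V(u_j)\in I^{(n)}_j,\ j\le n}\,\ind{\text{few children along }u}$, so that $\rho(n,\theta a_n/n)\ge \P(Z_n\ge 1)\ge \E(Z_n)^2/\E(Z_n^2)$. Second, compute $\E(Z_n)$ by the many-to-one Lemma~\ref{lem:manytoone}: it equals $\E[e^{a-S_n}\ind{S_j\in I^{(n)}_j,\,j\le n, E_n}]$, which, bounding $e^{-S_n}\ge e^{-a_n g(1)}$ on the tube and applying Lemma~\ref{lem:mogulskiiinf} (the enriched Mogul'ski\u\i{} lower bound, which is exactly why that lemma was stated), gives $\log \E(Z_n) \ge -a_n\big(g(1) + o(1) + \frac{L^*(a_n)}{a_n^\alpha}\cdot\frac{n}{a_n}\,C_*\int_0^1(g_s-f_s)^{-\alpha}ds + o(1)\big)$; since $a_n$ was chosen so that $n L^*(a_n)/a_n^{\alpha}\sim a_n$ via \eqref{eqn:defineAn}, this is $-a_n\big(\lambda + C_*\int_0^1(g_s-f_s)^{-\alpha}ds + o(1)\big)$. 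Third, estimate $\E(Z_n^2)$ by a many-to-two computation: decompose over the generation $k$ of the most recent common ancestor of the two chosen particles, use the spinal decomposition to reduce the sum over the two sub-branches to random-walk quantities, and crucially exploit that the "few children" restriction controls the branching so that the branching-point contributes only a polynomial (in $n$) factor. One shows $\E(Z_n^2)\le \E(Z_n) \cdot \big(\text{poly}(n)\big)\cdot \max_{k\le n}(\ldots)$, and the dominant $k$ is $k=0$ or $k=n$, so that $\E(Z_n^2)\le e^{o(a_n)}\,\E(Z_n)^2$. Fourth, combine: $\log\rho \ge 2\log\E(Z_n) - \log\E(Z_n^2) \ge -a_n\big(\lambda + C_*\int_0^1(g_s-f_s)^{-\alpha}ds + o(1)\big)$, and finally choose the curve $g$ (piecewise of the form $-\theta t + \lambda(1-t)^{1/(\alpha+1)}$-type profiles, or simply $g_s - f_s \equiv \lambda$ on a time interval of length $1/\theta\cdot(\text{rescale})$ followed by a negligible descent) so that both $g(1)$ and the cost integral together give $\lambda + C_*/\lambda^\alpha$ evaluated so as to minimize; optimizing over $\lambda$ yields the claimed $-C_*^{1/\alpha}/\theta^{1/\alpha}$ after the reparametrization matching $\rho(n,\theta a_n/n)$.

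The main obstacle is the second moment bound $\E(Z_n^2) \le e^{o(a_n)}\E(Z_n)^2$. The difficulty is twofold: one must control the branching at the common ancestor — this is precisely why one cannot work with all particles in the tube but must restrict to those with a bounded number of children along their path, and one needs an enriched many-to-one / many-to-two formula carrying the $\xi_j$'s, together with the integrability hypothesis \eqref{eqn:integrability} to make the extra factor from the branch point subexponential in $a_n$; and one must verify that, for every location $k$ of the branch point, the product of the two conditional survival probabilities in the sub-tubes, reweighted appropriately, is bounded by (the square of) the one-particle estimate up to $e^{o(a_n)}$ — this is a convexity statement about $\int_0^t + \int_0^t \ge \int_0^1 + \text{const}$ for the relevant cost functionals, which is routine once the curves are fixed but requires care because the tube width is not constant. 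A secondary technical point is choosing $g$ with enough smoothness (uniform continuity) to legitimately apply the continuous-function version of Lemma~\ref{lem:mogulskiiinf}, and handling the endpoint contributions $k\approx 0$ and $k \approx n$ separately since there the product degenerates and one falls back on the trivial bound $\rho\le 1$ or directly on $\E(Z_n)$.
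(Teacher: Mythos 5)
Your overall architecture is the same as the paper's: a second moment (Cauchy--Schwarz) argument applied to the number of particles staying in a tube of width $\approx\lambda a_n$ above the line of slope $-\theta a_n/n$, with an additional restriction on the reproduction along the ancestral path (the paper uses $\xi(u)=\log\sum_{v\in\Omega(u)}e^{V(v)-V(u)}\le\delta a_n$), the spinal decomposition to convert both moments into statements about the enriched spine walk, and Lemma \ref{lem:mogulskiiinf} together with \eqref{eqn:integCons} (a consequence of \eqref{eqn:integrability}) to evaluate the resulting probabilities. Your identification of the branch point as the main difficulty is also correct.

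However, there is a genuine quantitative error at the heart of your second-moment step: the claim $\E(Z_n^2)\le e^{o(a_n)}\E(Z_n)^2$ is false, and it cannot be repaired by a better choice of tube. If it were true, Cauchy--Schwarz would give $\liminf_n\frac{1}{a_n}\log\rho(n,\theta a_n/n)\ge 0$, contradicting Lemma \ref{lem:upperbound}. What actually happens is the following. The first moment is \emph{exponentially large}: restricting to paths ending near the bottom of the tube, $e^{-S_n}\ge e^{(\theta-\epsilon)a_n}$ (your bound $e^{-S_n}\ge e^{-a_ng(1)}$ misses this, since $g(1)\approx\lambda-\theta$ is not where the walk ends up being weighted), and one gets $\liminf\frac1{a_n}\log\E(X_n)\ge\theta-C_*/\lambda^\alpha$. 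The second moment decomposition over the branch point with the spine produces, at a branch time $k$, the factor $\E[e^{\xi(w_k)+V(w_k)}\cdots]$, and since $V(w_k)$ may be as large as $(\lambda-\theta k/n)a_n$ inside the tube, the branch point contributes an \emph{exponential} factor of order $e^{\lambda a_n}$, not a polynomial one. The correct outcome is $\limsup\frac1{a_n}\log\E(X_n^2)\le\lambda+2\theta-2C_*/\lambda^\alpha+\delta$ (valid when $\theta\ge C_*/\lambda^\alpha$), whence $\frac1{a_n}\log\big(\E(X_n)^2/\E(X_n^2)\big)\ge-\lambda-\delta$: the answer is exactly minus the width of the tube at time $0$. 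Minimizing $\lambda$ subject to the constraint $\theta\ge C_*/\lambda^\alpha$ gives $\lambda=(C_*/\theta)^{1/\alpha}$ and the stated bound. Your final optimization, which minimizes $\lambda+C_*/\lambda^\alpha$ and appeals to an unspecified ``reparametrization,'' does not produce $-C_*^{1/\alpha}/\theta^{1/\alpha}$ and should be replaced by this accounting.
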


\begin{proof}
For $u \in \T$, we recall that $\Omega(u) = \left\{ v \in \T : \pi v = \pi u \text{ and } v \neq u \right\}$ is the set of siblings of $u$. We introduce $\xi(u) = \log \sum_{v \in \Omega(u)} e^{V(v)-V(u)}$. Note that \eqref{eqn:integrability} implies
\begin{equation}
  \label{eqn:integCons}
  \lim_{x \to +\infty} \frac{x^\alpha}{L^*(x)}\hat{\P}\left( \xi(w_1) \geq x \right) = 0.
\end{equation}

Let $\theta > 0$, $\lambda > 0$ and $\delta > 0$. For $j \leq n$, we set $I^{(n)}_j = \left[ - a_n \theta j/n, a_n(\lambda - \theta j/n) \right]$ and
\[
  X_n = \sum_{|u|=n} \ind{V(u_j) \in I^{(n)}_j, j \leq n} \ind{\xi(u_j) \leq \delta a_n, j \leq n}.
\]
We observe that
\begin{align*}
  \rho\left(n,\theta \frac{a_n}{n}\right)
  &= \P\left( \exists |u| = n : V(u_j) \geq -a_n \theta j/n, j \leq n \right)\\
  &\geq \P\left( \exists |u| = n : V(u_j) \in I^{(n)}_j, j \leq n \right) \geq \P\left( X_n \geq 1\right),
\end{align*}
thus by the Cauchy-Schwarz inequality, $\rho\left( n, \theta \frac{a_n}{n} \right) \geq \frac{\left(\E(X_n)\right)^2}{\E(X_n^2)}$.

In a first time, we bound from below $\E(X_n)$. Using Proposition \ref{pro:spinaldecomposition}, we have
\begin{align*}
  \E(X_n)
  &= \bar{\E}\left[ \frac{1}{W_n} \sum_{|u|=n} \ind{V(u_j) \in I^{(n)}_j , j \leq n} \ind{\xi(u_j) \leq \delta a_n, j \leq n} \right]\\
  &= \hat{\E}\left[ \sum_{|u|=n} e^{-V(u)} \hat{\P}(u = w_n | \calF_n)\ind{V(u_j) \in I^{(n)}_j , j \leq n} \ind{\xi(u_j) \leq \delta a_n, j \leq n} \right]\\
  &= \hat{\E}\left[ e^{-V(w_n)} \ind{V(w_j) \in I^{(n)}_j, j \leq n} \ind{\xi(w_j) \leq \delta a_n, j \leq n} \right].
\end{align*}
Let $\epsilon \in (0,\lambda)$, as $I^{(n)}_n = [-\theta a_n, (\lambda-\theta)a_n]$ we have
\begin{align*}
  \E(X_n) &\geq \hat{\E}\left[ e^{-V(w_n)} \ind{V(w_n) \leq (\epsilon -\theta) a_n} \ind{V(w_j) \in I^{(n)}_j, j \leq n} \ind{\xi(w_j) \leq \delta a_n, j \leq n} \right]\\
  &\geq e^{(\theta - \epsilon) a_n} \hat{\P}\left[V(w_n) \leq (\epsilon - \theta) a_n, V(w_j) \in I^{(n)}_j,\xi(w_j) \leq \delta a_n, j \leq n\right].
\end{align*}
We introduce $0<x<y$ and $A>0$ such that $\hat{\P}(V(w_1) \in [x,y], \xi(w_1) \leq A) > 0$. Applying the Markov property at time $p=\floor{\epsilon a_n}$, for any $n \geq 1$ large enough we have
\begin{multline*}
  \hat{\P}\left[V(w_j) \in I^{(n)}_j,\xi(w_j) \leq \delta a_n, j \leq n\right]\\
  \geq \hat{\P}(V(w_1) \in [x,y], \xi(w_1) \leq A)^p \inf_{z \in [x \epsilon a_n, y \epsilon a_n]}\hat{\P}_z \left[ V(w_j) \in I^{(n)}_{j+p},\xi(w_j) \leq \delta a_n, j \leq n-p \right].
\end{multline*}
As \eqref{eqn:integCons} holds, we apply Lemma \ref{lem:mogulskiiinf},
\[
  \liminf_{n \to +\infty} \frac{1}{a_n} \log \E(X_n) \geq \theta - \epsilon - \frac{C_*}{\lambda^\alpha} + \epsilon \log \hat{\P}(V(w_1) \in [x,y], \xi(w_1) \leq A).
\]
Letting $\epsilon \to 0$, we have
\[
 \liminf_{n \to +\infty} \frac{1}{a_n} \log \E(X_n) \geq \theta - \frac{C_*}{\lambda^\alpha}.
\]

To bound from above the second moment of $X_n$, we apply once again the spinal decomposition,
\begin{align*}
  \E(X_n^2) &= \bar{\E}\left( \frac{X_n}{W_n} \sum_{|u|=n} \ind{V(u_j) \in I^{(n)}_j , j \leq n} \ind{\xi(u_j) \leq \delta a_n, j \leq n } \right)\\
  &= \bar{\E}\left( X_n \sum_{|u|=n} e^{-V(u)} \hat{\P}(w_n = u | \calF_n) \ind{V(u_j) \in I^{(n)}_j , j \leq n} \ind{\xi(u_j) \leq \delta a_n, j \leq n }\right)\\
  &= \hat{\E}\left( e^{-V(w_n)} X_n \ind{V(w_j) \in I^{(n)}_j, j \leq n} \ind{\xi(w_j) \leq \delta a_n, j \leq n} \right)\\
  &\leq e^{\theta a_n} \hat{\E}\left[ X_n \ind{V(w_j) \in I^{(n)}_j, j \leq n} \ind{\xi(w_j) \leq \delta a_n, j \leq n} \right].
\end{align*}
We decompose the set of individuals counted in $X_n$ under law $\hat{\P}$ according to their most recent common ancestor with the spine $w$, we have
\[
  X_n = \ind{V(w_j) \in I^{(n)}_j, j \leq n} \ind{\xi(w_j) \leq \delta a_n, j \leq n} + \sum_{j=1}^{n} \sum_{u \in \Omega(w_j)} \Lambda(u),
\]
where $u' \geq u$ means $u'$ is a descendant of $u$ and
\[
  \Lambda(u) = \sum_{|u'|=n, u' \geq u} \ind{V(u'_j) \in I^{(n)}_j , j \leq n} \ind{\xi(u'_j) \leq \delta a_n, j \leq n}.
\]

We write 
\[\calG= \sigma\left((w_k, \Omega(w_k), V(u), u \in \Omega(w_k)), k \geq 0 \right)\]
for the sigma-field of the information of the spine. Let $k \leq n$ and $u \in \Omega(w_k)$. Conditionally on $\calG$, the subtree rooted at $u$ with marks $V$ is a branching random walk with law $\P_{V(u)}$, therefore
\begin{align*}
  \hat{\E}\left(\left.\Lambda(u) \right| \calG\right)
  &\leq \E_{V(u)}\left( \sum_{|u'|=n-k} \ind{V(u'_j) \in I^{(n)}_{k+j}, j \leq n-k} \right)\\
  &\leq e^{V(u)} \E_{V(u)} \left( e^{-S_{n-k}} \ind{S_j \in I^{(n)}_{k+j}, j \leq n-k} \right]\\
  &\leq e^{V(u)}e^{\theta a_n} \sup_{z \in \R}\P_{z}\left( S_j \in I^{(n)}_{k+j} , j \leq n-k \right).
\end{align*}
Let $A \in \N$, we set $m_a = \floor{na/A}$ and $\Psi_{a,A} = \sup_{z \in \R} \P_z\left( S_j \in I^{(n)}_{m_a+j}, j \leq n-m_a \right)$. For any $k \leq m_{a+1}$ and $u \in \Omega(w_k)$, we have $\hat{\E}\left(\left.\Lambda(u) \right| \calG\right) \leq e^{V(u)}e^{\theta a_n} \Psi_{a+1,A}$, thus
\begin{align*}
  &\hat{\E}\left[  \ind{V(w_j) \in I^{(n)}_j, j \leq n} \ind{\xi(w_j) \leq \delta a_n, j \leq n}\sum_{k=m_a+1}^{m_{a+1}} \sum_{u \in \Omega(w_k)} \Lambda(u) \right]\\
  \leq & \sum_{k=m_a+1}^{m_{a+1}} \hat{\E}\left[ \ind{V(w_j) \in I^{(n)}_j, j \leq n} \sum_{u \in \Omega(w_k)} \ind{\xi(w_k) \leq \delta a_n} \Lambda(u) \right]\\
  \leq & \Psi_{a+1,A} e^{\theta a_n} \sum_{k=m_a+1}^{m_{a+1}} \hat{\E}\left[\ind{V(w_j) \in I^{(n)}_j, j \leq n} e^{\xi(w_k) + V(w_k)} \ind{\xi(w_k) \leq \delta a_n} \right]\\
  \leq & n\Psi_{a+1,A} \Psi_{0,A} e^{(\lambda + (1 - a/A) \theta +\delta) a_n}.
\end{align*}
Consequently, applying Corollary \ref{cor:mogulskiisup}, as soon as $\theta \geq \frac{C_*}{\lambda^\alpha}$ we have
\begin{equation*}
  \limsup_{n \to +\infty} \frac{1}{a_n} \log \E(X_n^2) \leq \max_{a \leq A} \left(\lambda + (2- a/A) (\theta- \frac{C_*}{\lambda^\alpha}) + \delta\right) \leq \lambda + 2 \theta - 2 \frac{C_*}{\lambda^\alpha} + \delta.
\end{equation*}

Using the first and second moment estimates of $X_n$, we have
\[  \liminf_{n \to +\infty} \frac{1}{a_n} \log \rho\left(n, \theta \frac{a_n}{n} \right) \geq -\lambda - \delta.\]
Letting $\delta \to 0$ and $\lambda \to (C_*/\theta)^\frac{1}{\alpha}$ concludes the proof.
\end{proof}

\begin{remark}
If we assume $(f^\theta,g^\theta)$ to be a pair of functions solution of the differential equation
\[
  \begin{cases}
    f(t) = - \theta t\\
    g(t) = -\theta + C_* \int_t^1 \frac{ds}{(g(s)-f(s))^\alpha},
  \end{cases}
\]
using similar estimates as the ones developed in Lemmas \ref{lem:upperbound} and \ref{lem:lowerbound}, we prove that for all $\theta \in \R$
\[
  \lim_{n \to +\infty} \frac{1}{a_n} \log \rho\left(n,\theta \frac{a_n}{n}\right) = -g^\theta(0).
\]
Theorem \ref{thm:asymptotic} is used to obtain bounds for $g^\theta(0)$ admitting a closed expression, that are precise for large $\theta$. Using similar methods, applied to different functions, we also obtain estimates on the behaviour of $g^\theta(0)$ for small values of $\theta$, namely
\[
  \lim_{\theta \to 0} g^\theta(0) = \left( (\alpha + 1)C_* \right)^\frac{1}{\alpha + 1}.
\]
\end{remark}

\section{Speed of the \textit{N}-branching random walk}
\label{sec:conclusion}

In \cite{BeG10}, to prove that $\lim_{n \to +\infty} (\log N)^2 v_N = C$ for a branching random walk in the usual boundary case, the essential tool was a version of Theorem~\ref{thm:asymptotic}, found in \cite{GHS11}. The same methods are applied to compute the asymptotic behaviour of $v_N$ under the assumptions of Theorem~\ref{thm:main}. Loosely speaking, we compare the $N$-BRW with $N$ independent branching random walks in which individuals crossing a linear boundary with slope $-\nu_N$ defined by
\begin{equation}
  \label{eqn:defineNuN}
  \nu_N := C_* \frac{L^*(\log N)}{(\log N)^\alpha}.
\end{equation}
By \eqref{eqn:encadrementAsymptotic}, for any $h>0$ and $N \geq 1$ large enough, $\rho\left(h \frac{(\log N)^{\alpha + 1}}{L^*(\log N)}, \nu_N\right) \approx \frac{1}{N}$. Thus $ \frac{(\log N)^{\alpha + 1}}{L^*(\log N)}$ is expected to be the correct time scale for the study of the process.

We start this section with a more precise definition of the branching-selection particle system we consider. We introduce additional notation that enables to describe it as a measure-valued Markov process. In Section \ref{subsec:coupling}, we introduce an increasing coupling between branching-selection particles systems, and use it to prove the existence of $v_N$. Finally, we obtain in Section \ref{subsec:ub} an upper bound for $v_N$ and in Section \ref{subsec:lb} a lower bound, that are enough to conclude the proof of Theorem \ref{thm:main}.

\subsection{Definition of the \textit{N}-branching random walk and notation}

The branching-selection models we consider are particle systems on $\R$. It is often convenient to represent the state of a particle system by a counting measure on $\R$ with finite integer-valued mass on every interval of the form $[x,+\infty)$. The set of such measures is written $\calM$. A Dirac mass at position $x \in \R$ indicates the presence of an individual alive at position $x$. With this interpretation, a measure in $\calM$ represents a population with a rightmost individual, and no accumulation point. For $N \in \N$, we write $\calM_N$ for the set of measures in $\calM$ with total mass $N$, that represent populations of $N$ individuals. If $\mu \in \calM_N$, then there exists $(x_1,\ldots, x_N) \in \R^N$ such that $\mu = \sum_{j=1}^N \delta_{x_j}$.

We introduce a partial order on $\calM$: given $\mu,\nu \in \calM$, we write $\mu \preccurlyeq \nu$ if for all $x \in \R$, $\mu([x,+\infty)) \leq \nu([x, +\infty))$. Note that if $\mu \preccurlyeq \nu$ then $\mu(\R) \leq \nu(\R)$. A similar partial order can be defined on the set of laws point processes. We say that $\calL \preccurlyeq \tilde{\calL}$ if there exists a coupling $(L,\tilde{L})$ of these two laws, such that $L$ has law $\calL$, $\tilde{L}$ has law $\tilde{\calL}$ and
\[
  \sum_{\ell \in L} \delta_{\ell} \preccurlyeq \sum_{\tilde{\ell} \in \tilde{L}} \delta_{\tilde{\ell}} \quad \mathrm{a.s.}
\]

Let $N \in \N$. We introduce a Markov chain $(X_n^N, n \geq 0)$ on $\calM_N$ called the $N$-BRW. For any $n \geq 0$, we denote by $(x_n^N(1),\ldots, x_n^N(N)) \in \R^N$ the random vector that verifies
\[
  X_n^N = \sum_{j=1}^N \delta_{x_n^N(j)} \quad \mathrm{and} \quad x_n^N(1) \geq x_n^N(2) \geq \cdots \geq x_n^N(N).
\]
Conditionally on $X_n^N$, $X_{n+1}^{N}$ is constructed as follows. Let $(L^1_n,\ldots, L^N_n)$ be $N$ i.i.d. point processes with law $\calL$, we set
\[
  Y_{n+1}^N = \sum_{i=1}^N \sum_{\ell^i \in L^i_n} \delta_{x_n^N(i) + \ell^i} \in \calM,
\]
which is the population after the branching step. We set $y = \sup\{ x \in \R : Y_{n+1}^N ([x,+\infty)) \geq N \}$ and $P = Y_{n+1}^N((y,+\infty))$. We write $X_{n+1}^N = {Y_{n+1}^N}_{|(y,+\infty)} + (N-P)\delta_y$. The natural filtration associated to the $N$-BRW is defined, for $n \in \N$, by $\calF_n = \sigma(L^1_j,\ldots, L^N_j, j \leq n)$. Whereas this is not done here, genealogical informations can freely be added to this process; breaking ties in any $\calF$-adapted manner to choose which of the individuals at the leftmost position are killed.

\subsection{Increasing coupling of branching-selection models}
\label{subsec:coupling}

We construct here a coupling between $N$-BRWs, that preserves the order $\preccurlyeq$. This coupling has been introduced in \cite{BeG10}, in a special case and is a key tool in the study of the branching-selection processes we consider. It is used to bound from above and from below the behaviour of the $N$-BRW by a branching random walk in which individuals that cross a line of slope $-\nu_N$ are killed. In a first time, we couple a single step of the $N$-BRW.

\begin{lemma}
Let $1 \leq m \leq n$ and $\mu \in \calM_m, \tilde{\mu} \in \calM_n$ be such that $\mu \preccurlyeq \tilde{\mu}$. Let $\calL \preccurlyeq \tilde{\calL}$ be two laws of point processes. For any $1 \leq M \leq N$, there exists a coupling of $X^M_1$ the first step of an $M$-BRW with reproduction law $\calL$ starting from $\mu$ with $\tilde{X}^N_1$ the first step of an $N$-BRW with reproduction law $\tilde{\calL}$ starting from $\tilde{\mu}$, in a way that $X^M_1 \preccurlyeq \tilde{X}^N_1$ a.s.
\end{lemma}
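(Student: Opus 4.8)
The plan is to build the joint realization of the two branching steps from a single source of randomness, namely a coupling of the point processes, and then check that the order $\preccurlyeq$ is preserved by each of the two operations that make up one step of the $N$-BRW: the branching step $\mu \mapsto Y$ and the selection step $Y \mapsto X$. First I would index the atoms of $\mu = \sum_{i=1}^m \delta_{x_i}$ and $\tilde\mu = \sum_{i=1}^n \delta_{\tilde x_i}$ in decreasing order. Since $\mu \preccurlyeq \tilde\mu$ and $m \leq n$, a short combinatorial argument gives an injection $\phi : \{1,\dots,m\} \hookrightarrow \{1,\dots,n\}$ with $x_i \leq \tilde x_{\phi(i)}$ for every $i$ (e.g. $\phi(i) = i$ works, since $\mu([\,x_i,+\infty)) \geq i$ forces $\tilde\mu([\,x_i,+\infty)) \geq i$, hence $\tilde x_i \geq x_i$). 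For each $i \leq m$ I take the coupling $(L^i, \tilde L^{\phi(i)})$ of $\calL$ with $\tilde\calL$ witnessing $\calL \preccurlyeq \tilde\calL$, so that $\sum_{\ell \in L^i}\delta_\ell \preccurlyeq \sum_{\tilde\ell \in \tilde L^{\phi(i)}}\delta_{\tilde\ell}$ a.s.; for indices $j \in \{1,\dots,n\}\setminus \phi(\{1,\dots,m\})$ I sample $\tilde L^j$ independently from $\tilde\calL$. This produces $(L^1,\dots,L^M)$ with $M = m$ point processes driving the $M$-BRW and $(\tilde L^1,\dots,\tilde L^N)$ with $N = n$ driving the $N$-BRW, all on one probability space.

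**Branching step preserves the order.**
The next step is to show $Y := \sum_{i=1}^m \sum_{\ell\in L^i}\delta_{x_i+\ell} \preccurlyeq \tilde Y := \sum_{j=1}^n\sum_{\tilde\ell\in\tilde L^j}\delta_{\tilde x_j+\tilde\ell}$ a.s. This is where the translation-covariance of $\preccurlyeq$ does the work: for each $i$, shifting $\sum_{\ell\in L^i}\delta_\ell \preccurlyeq \sum_{\tilde\ell\in\tilde L^{\phi(i)}}\delta_{\tilde\ell}$ by comparing $x_i \leq \tilde x_{\phi(i)}$ gives $\sum_{\ell\in L^i}\delta_{x_i+\ell} \preccurlyeq \sum_{\tilde\ell\in\tilde L^{\phi(i)}}\delta_{\tilde x_{\phi(i)}+\tilde\ell}$, because $\mu_1\preccurlyeq\mu_2$ and $a\le b$ imply (shift of $\mu_1$ by $a$) $\preccurlyeq$ (shift of $\mu_2$ by $b$) — a one-line check on tails $[\,z,+\infty)$. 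Summing these $m$ inequalities over $i$ (the relation $\preccurlyeq$ is stable under addition of measures), and noting that the extra terms $\sum_{j \notin \phi(\cdot)}\sum_{\tilde\ell}\delta_{\tilde x_j+\tilde\ell}$ only increase the right-hand side, yields $Y \preccurlyeq \tilde Y$.

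**Selection step preserves the order.**
It then remains to verify that the "keep the $M$ (resp.\ $N$) rightmost atoms" map is monotone from $(\calM,\preccurlyeq)$ to itself when $M \le N$. Concretely: if $Y \preccurlyeq \tilde Y$ and $X$ (resp.\ $\tilde X$) is obtained from $Y$ (resp.\ $\tilde Y$) by the truncation-plus-Dirac-at-the-cut described in the paper, then $X \preccurlyeq \tilde X$. I would prove this directly at the level of tails: fix $z \in \R$ and compare $X([\,z,+\infty))$ with $\tilde X([\,z,+\infty))$, splitting into the cases $z$ strictly above the $M$-cut of $Y$, $z$ at or below the $N$-cut of $\tilde Y$, and the intermediate range, using $Y([\,z,+\infty)) \le \tilde Y([\,z,+\infty))$ together with $M \le N$ in each case; since $X$ and $\tilde X$ both have total mass equal to their population size and agree with $Y$, $\tilde Y$ above their respective cuts, the inequality $X([\,z,+\infty)) \le \min(M, \tilde Y([\,z,+\infty))) \le \tilde X([\,z,+\infty))$ falls out. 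Chaining the two monotonicity facts gives $X^M_1 = X \preccurlyeq \tilde X = \tilde X^N_1$ a.s., which is the claim. The main obstacle is the bookkeeping in this last step: one has to be careful that the selection operator is well defined and genuinely order-preserving precisely because the cut level $y$ is itself monotone in the measure and because truncating to the $M$ rightmost points of the larger measure can never overtake truncating the smaller one to fewer points — getting all the boundary cases (ties exactly at $z$, fewer than $M$ atoms, the added $(M-P)\delta_y$ term) right is the only delicate part; the branching step and the construction of $\phi$ are routine.
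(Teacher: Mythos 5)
Your proof is correct and follows essentially the same route as the paper's: order the atoms of $\mu$ and $\tilde{\mu}$ decreasingly, drive the $i$-th pair of matched individuals with the coupled pair $(L,\tilde{L})$ witnessing $\calL \preccurlyeq \tilde{\calL}$ (independent extra copies of $\tilde{\calL}$ for the unmatched ones), and check that both the branching step and the ``keep the rightmost'' selection step preserve $\preccurlyeq$. You merely make explicit the two monotonicity verifications that the paper dispatches with ``we observe that'' and ``once again'' (and note that your identification $M=m$, $N=n$ is an immaterial slip: the branching uses one point process per current individual while the selection keeps $M\leq N$ of them, exactly as you argue).
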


\begin{proof}
Let $(L,\tilde{L})$ be a pair of point processes such that $\sum_{\ell \in L} \delta_\ell \preccurlyeq \sum_{\ell \in \tilde{L}} \delta_\ell$ a.s., $L$ has law $\calL$ and $\tilde{L}$ has law $\tilde{\calL}$. We set $((L_j,\tilde{L}_j), j \geq 0)$ i.i.d. random variables with the same law as $(L,\tilde{L})$. We write $\mu = \sum_{i=1}^m \delta_{x_i}$ and $\tilde{\mu} = \sum_{i=1}^n \delta_{y_i}$ in a way that $(x_j, j \leq m)$ and $(y_j, j \leq n)$ are ranked in the decreasing order. We set
\[
  \mu^1 = \sum_{i=1}^m \sum_{\ell_i \in L_i} \delta_{x_i + \ell_i} \quad \mathrm{and} \quad \tilde{\mu}^1 = \sum_{i=1}^n \sum_{\ell_i \in \tilde{L}_i} \delta_{y_i + \ell_i}.
\]
We observe that that $\mu^1 \preccurlyeq \tilde{\mu}^1$ a.s.

We set $X^M_1$ for the $M$ individuals with highest positions in $\mu^1$ and $\tilde{X}^N_1$ the $N$ individuals with highest positions in $\tilde{\mu}^1$. Once again, we have $X^M_1 \preccurlyeq \tilde{X}^N_1$ a.s.
\end{proof}

A direct consequence of this lemma is the existence of an increasing coupling between $N$-BRWs.
\begin{corollary}
\label{cor:increasing}
Let $\calL \preccurlyeq \tilde{\calL}$ be two laws of point processes. For all $1 \leq M \leq N \leq +\infty$, if $X^M_0 \preccurlyeq \tilde{X}^N_0$, then there exists a coupling between the $M$-BRW $(X_n^M)$ with law $\calL$ and the $N$-BRW $(\tilde{X}_n^N)$ with law $\tilde{\calL}$ verifying
\[
  \forall n \in \N, X_n^M \preccurlyeq \tilde{X}_n^N \quad \mathrm{a.s.}
\]
\end{corollary}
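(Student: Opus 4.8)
The plan is to iterate the one-step coupling of the previous lemma and glue the single steps together with the Ionescu--Tulcea theorem. Write $Q^M$ for the transition kernel of the $M$-BRW with reproduction law $\calL$ and $Q^N$ for that of the $N$-BRW with law $\tilde\calL$, and set $D = \{ (\mu,\tilde\mu) : \mu \in \calM_M,\ \tilde\mu \in \calM_N,\ \mu \preccurlyeq \tilde\mu \}$ (with $\calM_N$ replaced by $\calM$ when $N = +\infty$, in which case $Q^N$ is simply the law of one branching step without selection; this is well defined since $\#L < +\infty$ a.s.\ by \eqref{eqn:reproduction}, so the branching operation maps $\calM$ into $\calM$). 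The previous lemma produces, for every $(\mu,\tilde\mu) \in D$, a probability measure on $\calM_M \times \calM_N$ supported on $D$ whose first marginal is $Q^M(\mu,\cdot)$ and whose second marginal is $Q^N(\tilde\mu,\cdot)$; the construction there is explicit (it uses i.i.d.\ pairs $(L_i,\tilde L_i)$ coupled so that $\sum_{\ell\in L_i}\delta_\ell \preccurlyeq \sum_{\ell\in\tilde L_i}\delta_\ell$, followed by the deterministic selection map), so this measure can be chosen as a measurable function of $(\mu,\tilde\mu)$, i.e.\ it is a Markov kernel $\bar Q$ on $D$.

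Next I would apply the Ionescu--Tulcea theorem to the initial state $(X_0^M,\tilde X_0^N)$ and the constant sequence of kernels $\bar Q$: this yields a process $((X_n^M,\tilde X_n^N),\, n\ge 0)$ taking values in $D^{\N}$, hence $X_n^M \preccurlyeq \tilde X_n^N$ for every $n$ almost surely, which is the desired conclusion. It then only remains to check that the two coordinates have the announced laws. Since $\bar Q\big((\mu,\tilde\mu),\,\cdot \times \calM_N\big) = Q^M(\mu,\cdot)$ does not depend on $\tilde\mu$, the first coordinate $(X_n^M)$ is, with respect to its own filtration, a Markov chain with transition kernel $Q^M$ started from $X_0^M$; by definition this is exactly the $M$-BRW with reproduction law $\calL$. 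Symmetrically, $(\tilde X_n^N)$ is the $N$-BRW with law $\tilde\calL$. If one wants $X_0^M$ and $\tilde X_0^N$ to be themselves random, one first fixes a coupling of them with $X_0^M \preccurlyeq \tilde X_0^N$ a.s.\ and conditions on it.

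There is no genuinely hard step here; the only points needing a line of justification are the measurability of $\bar Q$ in $(\mu,\tilde\mu)$ — immediate from the explicit selection-plus-i.i.d.-noise construction in the lemma — and the observation that, because $\bar Q$ is a kernel whose marginals do not see the other coordinate, each coordinate is automatically a Markov chain with the correct kernel and the correct initial law. The case $N = +\infty$ requires nothing new beyond reading "keep the $N$ rightmost" as "keep everyone": the two ingredients of the one-step lemma, namely domination of the branched populations and stability of $\preccurlyeq$ under taking the $M$ (resp.\ $N$) rightmost points when $M \le N$, both go through verbatim in that reading.
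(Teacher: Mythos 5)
Your proof is correct and follows exactly the route the paper intends: the paper states the corollary as a ``direct consequence'' of the one-step lemma, i.e.\ iteration of that coupling, which is precisely what you do, merely making explicit the measurability of the coupled kernel and the Ionescu--Tulcea construction that the paper leaves implicit. The extra care you take with the $N=+\infty$ case and with checking that each marginal is a Markov chain with the right kernel is sound and adds nothing controversial.
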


Using this increasing coupling, we prove that with high probability, the cloud of particles in the $N$-BRW does not spread.
\begin{lemma}
\label{lem:cloudsize}
Under the assumptions \eqref{eqn:reproduction}, \eqref{eqn:critical1} and \eqref{eqn:lowerIntegrability} there exists $C>0$ such that for all $N \geq 2$, $y \geq 1$ and $n \geq C (\log N+ \log y)$,
\[
  \P\left(x^N_n(1) - x^N_n(N) \geq y \right) \leq C \left(\frac{N (\log N+\log y)}{y}\right)^2.
\]
\end{lemma}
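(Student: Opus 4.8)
The plan is to bound the spread $x^N_n(1) - x^N_n(N)$ by controlling two things separately: how far the rightmost individual can travel (an upper tail) and how far the leftmost individual can fall behind (a lower tail). For the upper bound, I would use the increasing coupling of Corollary~\ref{cor:increasing} to dominate the $N$-BRW by the full (selection-free) branching random walk: starting all $N$ particles at the same point, the rightmost position at time $n$ in the $N$-BRW is at most the maximal displacement $M_n$ of a single BRW plus the initial spread, and \eqref{eqn:maxdis} together with a union bound over the $N$ founding particles gives $\P(x^N_n(1) - x^N_n(0)(1) \geq z) \leq N e^{-z}$ uniformly in $n$. So with $z$ of order $\log N + \log y$ the rightmost particle is under control with the required probability.

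The genuinely new work is the lower bound on $x^N_n(N)$, i.e.\ showing the cloud does not get a long left tail, and this is where assumption \eqref{eqn:lowerIntegrability} enters. The idea is that the $N$-BRW at each step keeps the $N$ rightmost children, so the leftmost survivor cannot be much worse than the worst child of a ``typical'' individual; more precisely, a surviving particle at generation $n+1$ has a parent at generation $n$, and its displacement from that parent is at least $\min_{\ell \in L} \ell$ for the relevant point process. I would show that, conditionally on the cloud having bounded spread at time $n$, the probability that $x^N_{n+1}(N) - x^N_{n+1}(1)$ exceeds $y$ is small: either some of the $N$ point processes produced an atypically negative minimal atom (controlled by \eqref{eqn:lowerIntegrability} and Chebyshev, costing a factor $N/y^2$ times $\E[(\max_\ell \ell)^2]$ — note $\min_\ell \ell \geq -(\#L)\max_\ell\ell$ type bounds, or more directly one works with $|\max_\ell \ell|$ as the paper's condition suggests), or the rightmost child came from a parent that was itself far to the right, which we already control. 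Iterating this one-step estimate over $n \geq C(\log N + \log y)$ generations and summing the error probabilities (a geometric-type sum, which is why the time horizon must be logarithmic and why the bound has the stated form $C(N(\log N + \log y)/y)^2$) yields the claim. One should also invoke \eqref{eqn:reproduction} to guarantee the process never dies, so $\calM_N$ is genuinely preserved.

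The main obstacle I anticipate is making the lower-tail argument quantitative in a self-contained way: the minimal position in the $N$-BRW is the delicate quantity, since unlike the maximum there is no clean martingale like $(W_n)$ to exploit, and one must genuinely use that selection keeps the population from spreading left by more than the fluctuations injected at a single step. The paper flags this explicitly (``sufficient to bound from below in a crude way the minimal position''), so I would not aim for optimality: a crude bound where each generation can add at most $y/2$ to the spread with probability $\leq C N \E[|\max_\ell \ell|^2]/y^2$, combined over the $O(\log N + \log y)$ relevant generations via a union bound, should suffice. Assembling the two sides — $x^N_n(1)$ not too large, $x^N_n(N)$ not too small relative to $x^N_n(1)$ — and optimizing the split of $y$ gives the final estimate, with the squared logarithmic factor coming from the quadratic moment in \eqref{eqn:lowerIntegrability} and the $N$ coming from the union bound over particles.
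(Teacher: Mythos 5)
Your upper-tail bound (coupling with $N$ independent free branching random walks and applying \eqref{eqn:maxdis} with $z \asymp \log N + \log y$) is exactly the paper's, but your lower-tail argument has a genuine gap. The lemma must hold for \emph{all} $n \geq C(\log N + \log y)$, including arbitrarily large $n$; your plan --- a one-step estimate ``the spread at time $m+1$ is controlled given the spread at time $m$'', iterated over the generations with a union bound --- accumulates both the error probabilities and the spread itself linearly in $n$, so it cannot yield a bound uniform in $n$. You read the hypothesis $n \geq C(\log N+\log y)$ as saying the time horizon is logarithmic; it is not --- it is there so that one can \emph{look back} $k \asymp \log N + \log y$ steps from time $n$. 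The missing idea is a regeneration mechanism forcing the population at time $n$ to forget everything before time $n-k$. The paper gets this by introducing the truncated point process $L_R$ (the maximal atom together with every atom $\geq -R$), with $R$ chosen via \eqref{eqn:reproduction} so that $\E[\# L_R]>1$: the $N$-BRW with law $\calL_R$ started from the \emph{single} particle at $x^N_{n-k}(1)$ is dominated by the true process, saturates to $N$ particles within $k=O(\log N)$ generations (the Fleischmann--Wachtel lower-deviation bound supplies the $N^\alpha m_R^{-k\alpha}$ error term, which dictates the choice of $k$), and every step of every particle retained in it is at least $\min(-R,\max L)$, a variable with finite second moment by \eqref{eqn:lowerIntegrability}. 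Chebyshev applied to the sum of $kN$ such steps produces the $(kN/y)^2$ term, whence the stated bound.

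A secondary issue: your fallback inequality $\min_{\ell\in L}\ell \geq -(\# L)\max_{\ell\in L}\ell$ is false (take atoms at $1$ and $-100$), and nothing in the hypotheses controls $\min_{\ell\in L}\ell$. The correct one-step observation --- which the paper does use, in the lower bound of Theorem \ref{thm:main} --- is that selection always retains at least the maximal child of each surviving parent, so $x^N_{m+1}(N) \geq x^N_m(N) + \min_{j\leq N}\max L^j_m$, which involves only $\max L$. But even with this repaired, the iterate-from-time-zero strategy fails for the reason above.
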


\begin{proof}
Let $n \in \N$ and $k \leq n$, we bound $x^N_n(1) - x^N_{n-k}(1)$ from above and $x^N_n(N)-x^N_{n-k}(1)$ from below to estimate the size of the cloud of particles at time $n$. An appropriate choice of $k$ concludes the proof of Lemma \ref{lem:cloudsize}.

We first observe that the $N$-BRW starting from position $X^N_{n-k}$ can be coupled with $N$ i.i.d. branching random walks $((\T^j, V^j), j \leq N)$ with $(\T^j,V^j)$ starting from position $x^N_{n-k}(j)$, in a way that
\[
  X^N_n \preccurlyeq \sum_{j=1}^N \sum_{u \in \T^j, |u|=k} \delta_{V^j(u)}.
\]
As a consequence, by \eqref{eqn:maxdis}, for any $y \in \R$ and $k \leq n$
\begin{equation}
  \label{eqn:ub}
  \P\left(x^N_n(1) - x^N_{n-k}(1) \geq y\right) \leq \P\left( \max_{j \leq N} \max_{u \in \T^j, |u| = k} V^j(u) \geq y \right) \leq N e^{-y}.
\end{equation}

We now bound from below the displacements in the $N$-BRW. Let $L$ be a point process with law $\calL$. By \eqref{eqn:reproduction}, there exists $R>0$ such that $\E\left( \sum_{\ell \in L} \ind{\ell \geq -R} \right) > 1$. We denote by $L_R$ the point process that consists in the maximal point in $L$ as well as any other point that is greater than $-R$. Using Corollary \ref{cor:increasing}, we couple $(X^N_{n-k+m}, m \geq 0)$ with the $N$-BRW $(X^{N,R}_m, m \geq 0)$ of reproduction law $\calL_R$, starting from a unique individual located at $x^N_{n-k}(1)$ at time $0$ in an increasing fashion.

As $X^{N,R}_m \preccurlyeq X^N_{n-k+m}$, if $X^{N,R}_k(\R) = N$, then $x^{N,R}_k(N) \leq x^N_{n}(N)$. Moreover by definition of $L_R$, the minimal displacement made by one child with respect to its parent is given by $\min(-R, \max L)$. For $n \in \N$, we write $Q_{n}$ a random variable defined as the sum of $n$ i.i.d. copies of $\min(-R, \max L)$. Observe that $Q_{kN}$ is stochastically dominated by $x^{N,R}_{k}(N)-x^N_{n-k}(1)$. Consequently
\[
  \P\left( x^N_n(N)-x^N_{n-k}(1) \leq -y \right) \leq \P\left( X^{N,R}_k(\R) < N \right) + \P\left( Q_{kN} \leq -y \right).
\]
By \eqref{eqn:lowerIntegrability}, we have $\P(Q_{kN} \leq -y) \leq C\frac{k^2 N^2}{y^2}$. Moreover the process $(X^{N,R}_n(\R), n \geq 0)$ is a Galton-Watson process with reproduction law given by $\# L_R$, that saturates at $N$. We set $m_R = \E(\# L_R)$ and $\alpha = - \frac{\log \P(\# L_R = 1)}{\log m_R}$. We have $\P( X^{N,R}_k(\R) < N ) \leq C N^\alpha m_R^{-k \alpha}$, by \cite{FlW07}.
We conclude that
\begin{equation}
  \label{eqn:lb}
  \P\left( x^N_n(N) - x^N_{n-k}(1) \leq y \right) \leq C\frac{k^2 N^2}{y^2} + C \frac{N^\alpha}{m_R^{k \alpha}}.
\end{equation}

Combining \eqref{eqn:ub} and \eqref{eqn:lb}, for all $y \geq 1$ and $k \in \N$ we have
\[
  \P\left( x^N_n(1) - x^N_n(N) \geq 2y \right) \leq N e^{-y} +  C\frac{k^2 N^2}{y^2} + C \frac{N^\alpha}{m_R^{k\alpha}}.
\]
Thus, setting $k = \floor{\frac{3(\log N + \log y)}{\alpha \log m_R}}$, there exists $C>0$ such that for any $y \geq 1$ and $N \geq 1$ large enough, for any $n \geq k$,
\[
  \P\left( x^N_n(1) - x^N_n(N) \geq 2y \right) \leq C \left(\frac{N (\log N+\log y)}{y}\right)^2.
\]
\end{proof}

Applying Lemma \ref{lem:cloudsize} and the Borel-Cantelli lemma, for any $N \geq 2$ we have
\[
  \lim_{n \to +\infty} \frac{x^N_n(1) - x^N_n(N)}{n} = 0 \quad \text{a.s. and in } L^1.
\]

\begin{lemma}
\label{lem:existenceSpeed}
Under the assumptions \eqref{eqn:reproduction}, \eqref{eqn:critical1} and \eqref{eqn:lowerIntegrability}, for any $N \geq 1$, there exists $v_N \in \R$ such that for all $j \leq N$
\begin{equation}
  \label{eqn:existenceSpeed}
  \lim_{n \to +\infty} \frac{x^N_n(j)}{n} = v_N \quad \text{a.s. and in }L^1.
\end{equation}
Moreover, if $X^N_0 = N \delta_0$, we have
\begin{equation}
  \label{eqn:encadrementSpeed}
  v_N = \inf_{n \geq 1} \frac{\E(x_n^N(1))}{n} = \sup_{n \geq 1} \frac{\E(x_n^N(N))}{n}.
\end{equation}
\end{lemma}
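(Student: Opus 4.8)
The plan is to use a standard subadditivity argument combined with the increasing coupling of Corollary \ref{cor:increasing} and the non-spreading estimate that follows Lemma \ref{lem:cloudsize}. First I would reduce to the case $X^N_0 = N\delta_0$: the increasing coupling shows that the asymptotic speed does not depend on the (deterministic, bounded) initial configuration, since if $\mu \preccurlyeq \nu$ are two initial populations of size $N$ then the corresponding $N$-BRWs can be coupled so that $X_n^N \preccurlyeq \tilde{X}_n^N$ for all $n$, hence $x_n^N(1) \leq \tilde{x}_n^N(1)$; sandwiching an arbitrary bounded initial condition between two translates of $N\delta_0$ and dividing by $n$ gives that the limit, if it exists, is translation-invariant and initial-condition-independent. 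So from now on $X^N_0 = N\delta_0$.

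Next I would establish superadditivity of the sequence $\bigl(\E(x_n^N(N))\bigr)_{n \geq 1}$ and subadditivity of $\bigl(\E(x_n^N(1))\bigr)_{n\geq 1}$. For subadditivity: by the branching property at time $m$, the $N$-BRW from time $m$ onward is dominated (in the $\preccurlyeq$ order, using the coupling argument as in the proof of Lemma \ref{lem:cloudsize}) by $N$ independent branching random walks started from the points $x_m^N(1),\dots,x_m^N(N)$, all of which lie at or below $x_m^N(1)$. Hence $x_{m+n}^N(1) \leq x_m^N(1) + \max_{j\leq N}\max_{|u|=n} V^j(u)$, and taking expectations, using that the maximal displacement over $n$ steps of a BRW has the same law regardless of starting point, yields $\E(x_{m+n}^N(1)) \leq \E(x_m^N(1)) + \E(x_n^N(1))$ (one needs $\E(x_n^N(1)) < \infty$, which follows from \eqref{eqn:maxdis} applied to the $N$ BRWs, giving an exponential tail, together with a crude lower bound from \eqref{eqn:lowerIntegrability} as in Lemma \ref{lem:cloudsize} to control the negative part). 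By Fekete's lemma, $\E(x_n^N(1))/n \to \inf_n \E(x_n^N(1))/n =: v_N^+$. Symmetrically, lower-bounding the $N$-BRW from time $m$ by a single BRW started from $x_m^N(N)$ and restricting to an event of the positive-drift type used in Lemma \ref{lem:cloudsize} — or more simply using that $X_{m+n}^N \succcurlyeq$ the rightmost $N$ individuals of the BRW from $x_m^N(N)$ — gives superadditivity of $\E(x_n^N(N))$ and convergence of $\E(x_n^N(N))/n$ to $v_N^- := \sup_n \E(x_n^N(N))/n$.

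Then I would show $v_N^+ = v_N^-$ and upgrade the convergence in expectation to almost-sure and $L^1$ convergence for every coordinate $x_n^N(j)$. Since $x_n^N(N) \leq x_n^N(j) \leq x_n^N(1)$, it suffices to prove $v_N^+ = v_N^-$ and that $x_n^N(1)/n$ and $x_n^N(N)/n$ both converge a.s. and in $L^1$. The equality of the two constants is exactly the content of the remark following Lemma \ref{lem:cloudsize}: $\E\bigl(x_n^N(1) - x_n^N(N)\bigr)/n \to 0$ (indeed $\bigl(x_n^N(1)-x_n^N(N)\bigr)/n \to 0$ a.s. and in $L^1$), so $v_N^+ - v_N^- = \lim_n \E(x_n^N(1)-x_n^N(N))/n = 0$. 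For the a.s. convergence, I would apply Kingman's subadditive ergodic theorem: the quantities $x_{n}^N(1)$ built from the i.i.d. sequence $(L^1_k,\dots,L^N_k)_{k\geq 0}$ satisfy, via the coupling above, a subadditive relation $x_{m+n}^N(1) \leq x_m^N(1) + \hat{x}_n^{(m)}$ where $\hat{x}_n^{(m)}$ depends only on $(L^i_k)_{m\leq k < m+n}$ and is distributed as the maximal displacement of $N$ independent BRWs over $n$ steps; together with the integrability of $x_n^N(1)$ and stationarity of the increments, Kingman's theorem gives $x_n^N(1)/n \to v_N^+$ a.s., and $L^1$ convergence follows from uniform integrability (the exponential upper tail from \eqref{eqn:maxdis} and the $L^2$ lower bound from \eqref{eqn:lowerIntegrability}). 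The same applies to $x_n^N(N)/n \to v_N^-$. Combined with $v_N^+=v_N^-=:v_N$ and the sandwich $x_n^N(N) \leq x_n^N(j) \leq x_n^N(1)$, this gives \eqref{eqn:existenceSpeed} and \eqref{eqn:encadrementSpeed}.

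The main obstacle is making the subadditive/superadditive comparison rigorous in the $\preccurlyeq$ framework: one must check that selecting the $N$ rightmost points commutes appropriately with the coupling so that the one-step-at-a-time domination of Corollary \ref{cor:increasing} indeed yields $x_{m+n}^N(1) \leq x_m^N(1) + (\text{BRW maximum over } n \text{ steps})$ with the error term genuinely independent of $\calF_m$ and with the right law — this is where the branching property and the construction of the coupling interact, and it is the step requiring the most care. The integrability inputs (finite expectation of $x_n^N(1)$ and of $x_n^N(N)$) are routine given \eqref{eqn:maxdis} and \eqref{eqn:lowerIntegrability}, as already used in Lemma \ref{lem:cloudsize}.
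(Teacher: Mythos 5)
Your overall architecture matches the paper's: Kingman's subadditive ergodic theorem for $x^N_n(1)$ and $x^N_n(N)$, Lemma \ref{lem:cloudsize} to identify the two limits, and the increasing coupling of Corollary \ref{cor:increasing} to handle a general initial condition. But there is a genuine error in the key subadditivity step. You dominate the $N$-BRW restarted at time $m$ by $N$ \emph{independent, unselected} branching random walks, write $x^N_{m+n}(1) \leq x^N_m(1) + \max_{j \leq N}\max_{|u|=n} V^j(u)$, and then identify the law of the increment with that of $x^N_n(1)$. This identification is false: $\max_{j\leq N}\max_{|u|=n}V^j(u)$ is the maximum of $N$ free BRWs, which strictly stochastically dominates the maximum $x^N_n(1)$ of the selected $N$-BRW started from $N\delta_0$ (already for $N=1$ the former is a maximum over all paths while the latter is the value of the greedy path). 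Consequently you do not obtain $\E(x^N_{m+n}(1)) \leq \E(x^N_m(1)) + \E(x^N_n(1))$, and the family you feed into Kingman's theorem is not internally subadditive; a blocking argument with your increment only yields $\limsup_n x^N_n(1)/n \leq \lim_m \E\left(\max_{j}\max_{|u|=m}V^j(u)\right)/m = 0$, which is true but vacuous, gives no almost sure convergence, and does not identify $v_N$ with $\inf_n \E(x^N_n(1))/n$.

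The fix, which is what the paper does, is to keep the selection in the dominating process: since $X^N_m \preccurlyeq N\delta_{x^N_m(1)}$, Corollary \ref{cor:increasing} and translation invariance give $X^N_{0,m+n} \preccurlyeq \phi_{x^N_{0,m}(1)}\left(X^N_{m,m+n}\right)$, where $(X^N_{m,m+n})_{n\geq 0}$ is the $N$-BRW started afresh from $N\delta_0$ and driven by the point processes of generations $m,\ldots,m+n-1$. Then $x^N_{0,m+n}(1) \leq x^N_{0,m}(1) + x^N_{m,m+n}(1)$, with increments that have exactly the law of $x^N_{0,n}(1)$ and are independent of the past, so Kingman applies verbatim (integrability from \eqref{eqn:maxdis} and \eqref{eqn:lowerIntegrability}, as you note). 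The symmetric construction with $N\delta_{x^N_{0,m}(N)}$ gives the superadditive relation for $x^N_n(N)$. The rest of your plan --- equality of the two limits via Lemma \ref{lem:cloudsize}, the sandwich $x^N_n(N) \leq x^N_n(j) \leq x^N_n(1)$, and the reduction of a general initial condition to $N\delta_0$ by coupling with translates --- is correct and is exactly the paper's route.
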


\begin{proof}
This proof is based on the Kingman's subadditive ergodic theorem. We first prove that if $X^N_0 = N \delta_0$, then $(x^N_n(1))$ is a subadditive sequence, and $(x^N_n(N))$ is an overadditive one. Thus $\frac{x^N_n(1)}{n}$ and $\frac{x^N_n(N)}{n}$ converge, and $\lim_{n \to +\infty} \frac{x^N_n(1)}{n} = \lim_{n \to +\infty} \frac{x^N_n(N)}{n}$ a.s. by Lemma \ref{lem:cloudsize}. We treat in a second time the case of a generic starting value $X^N_0 \in \calM_N$, using Corollary \ref{cor:increasing}.

Let $N \in \N$, let $(L^j_{n}, j \leq N, n \geq 0)$ be an array of i.i.d. point processes with common law $\calL$. We define on the same probability space random measures $(X^N_{m,n}, 0 \leq m \leq n)$ such that for all $m \geq 0$, $(X^N_{m,m+n}, n \geq 0)$ is an $N$-BRW starting from the initial distribution $N \delta_0$. For any $m \geq 0$, we set $X^N_{m,m} = N \delta_0$. Let $0 \leq m \leq n$, we assume that $X^N_{m,n} = \sum_{j=1}^N \delta_{x^N_{m,n}(j)}$, with $(x^N_{m,n}(j))$ listed in the decreasing order, is given. We define $(x^N_{m,n+1}(j), j \geq 0)$, again listed in the decreasing order, in a way that
\[
  \sum_{j=1}^{+\infty} \delta_{x^N_{m,n+1}(j)} = \sum_{j=1}^N \sum_{\ell^j_{n} \in L^j_{n}} \delta_{x^N_{m,n}(j) + \ell^j_{n}},
\]
and set $X^N_{m,n+1} = \sum_{j=1}^N \delta_{x^N_{m,n+1}(j)}$.

For $x \in \R$, we write $\phi_x$ for the shift operator on $\calM$, such that $\phi_x(\mu) = \mu(.-x)$. With this definition, we observe that for any $0 \leq m \leq n$ we have
\[
  \phi_{x^N_{0,n}(N)}\left(X^N_{n,n+m}\right) \preccurlyeq X^N_{0,n+m} \preccurlyeq  \phi_{x^N_{0,n}(1)}\left(X^N_{n,n+m}\right).
\]
As a consequence,
\begin{equation}
  \label{eqn:subadditive}
  x^N_{0,n+m}(1) \leq x^N_{0,n}(1) + x^N_{n,n+m}(1) \quad \mathrm{and} \quad x^N_{0,n+m}(N) \geq x^N_{0,n}(N) + x^N_{n,n+m}(N).
\end{equation}
We apply Kingman's subadditive ergodic theorem. Indeed for any $n \geq 0$, $(x^N_{n,n+m}(1), m \geq 0)$ is independent of $(x^N_{k,l}(1), 0 \leq k \leq l \leq n)$ and has the same law as $(x^N_{0,m}(1), m \geq 0)$. Moreover, $\E(|x^N_{0,1}(1)|) < +\infty$ by \eqref{eqn:lowerIntegrability}. As a consequence, \eqref{eqn:subadditive} implies there exists $v_N \in \R$ verifying
\[
  \lim_{n \to +\infty} \frac{x^N_{0,n}(1)}{n} = v_N \quad \text{a.s. and in } L^1,
\]
and $v_N = \inf_{n \in \N} \frac{\E(x^N_{0,n}(1))}{n}$. Similarly, $\lim_{n \to +\infty} \frac{x^N_{0,n}(N)}{n} = \sup_{n \in \N} \frac{\E(x^N_{0,n}(N))}{n}$ a.s. and in $L^1$, which proves that \eqref{eqn:encadrementSpeed} is verified. Moreover, by Lemma \ref{lem:cloudsize}, these limits are equal.

We now consider the general case. Let $(X^N_n, n \geq 0)$ be an $N$-BRW. We couple this process with $Y^N$ and $Z^N$ two $N$-BRWs starting from $N \delta_{x^N_0(1)}$ and $N\delta_{x^N_0(N)}$ respectively, such that for all $n \in \N$, $Z^N_n \preccurlyeq X^N_n \preccurlyeq Y^N_n$. We have
\[
  \forall j \leq N, z^N_n(N) \leq x^N_n(N) \leq x^N_n(j) \leq x^N_n(1) \leq y^N_n(1).
\]
Therefore, for any $j \leq N$, we have
\begin{equation*}
  v_N = \liminf_{n \to +\infty} \frac{z^N_n(N)}{n} \leq \liminf_{n \to +\infty} \frac{x^N_n(j)}{n}
  \leq \limsup_{n \to +\infty} \frac{x^N_n(j)}{n} \leq \limsup_{n \to +\infty} \frac{y^N_n(1)}{n} = v_N \quad \text{a.s.}
\end{equation*}
which yields $\lim_{n \to +\infty} x^N_n(j)/n=v_N$ a.s. Similarly, we have
\begin{align*}
  \E\left[ \left| \tfrac{x^N_n(j)}{n} - v_N \right| \right]& \leq \E\left[ \left( \tfrac{y^N_n(1)}{n}-v_N \right) \ind{x^N_n(j) \geq n v_N} \right] + \E\left[ \left( v_N - \tfrac{z^N_n(N)}{n} \right) \ind{x^N_n(j) \leq n v_N} \right]\\
  &\leq \E\left[ \left| \tfrac{y^N_n(1)}{n}-v_N \right| \right] +  \E\left[ \left| \tfrac{z^N_n(N)}{n}-v_N \right| \right].
\end{align*}
We conclude that $x^N_n(j)/n$ also converges to $v_N$ in $L^1$.
\end{proof}

\begin{remark}
Lemma \ref{lem:existenceSpeed} proves the limit in \eqref{eqn:existenceSpeed} does not depend on the starting position of the $N$-BRW. To prove Theorem \ref{thm:main}, we now study the asymptotic behaviour of $v_N$. This can be done considering only $N$-BRW starting from the initial condition $N\delta_0$.
\end{remark}

To study the asymptotic behaviour of $v_N$ as $N \to +\infty$, we couple the $N$-BRW with a branching random walk in which individuals are killed below the line of slope $-\nu_N$. Applying Theorem \ref{thm:asymptotic}, we derive upper and lower bounds for $v_N$.

\subsection{An upper bound on the maximal displacement}
\label{subsec:ub}

To obtain an upper bound on the maximal displacement in the $N$-branching random walk, we link the existence of an individual alive at time $n$ that made a large displacement with the event there exists an individual staying above a line of slope $-\nu_N$ during $m_N$ units of time in a branching random walk. The following lemma is an easier and less precise version of \cite[Lemma 2]{BeG10}, that is sufficient for our proofs.
\begin{lemma}
\label{lem:info}
Let $v < K$. We set $(x_n, n \geq 0)$ a sequence of real numbers with $x_0=0$ such that $\sup_{i \in \N} (x_{i+1}-x_i) \leq K$. For all $m \leq n$, if $x_n > (n-m)v + Km$, then there exists $i \leq n-m$ such that for all $j \leq m$, $x_{i+j}-x_i \geq v j$.
\end{lemma}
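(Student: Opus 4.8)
The statement is essentially a pigeonhole/telescoping argument, so the plan is to argue by contradiction. Suppose that for every index $i \leq n-m$ there exists some $j \leq m$ (depending on $i$) with $x_{i+j} - x_i < vj$, i.e. the increment over the window $[i,i+j]$ falls strictly below the line of slope $v$. The goal is to build a chain of such ``bad'' windows, tiling the interval $[0,n]$ from the left, and show that this forces $x_n$ to be too small, contradicting the hypothesis $x_n > (n-m)v + Km$.

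First I would construct the chain greedily. Set $i_0 = 0$. Given $i_k \leq n - m$, pick $j_k \leq m$ with $x_{i_k + j_k} - x_{i_k} < v j_k$ (such $j_k$ exists by the contradiction hypothesis, provided $i_k \leq n-m$; note $j_k \geq 1$ since a zero increment is not $<0$ when $v \geq 0$ — one should check the sign of $v$, but the intended regime is $v < 0 < K$ so this is fine, and more carefully $j_k \geq 1$ because the window must be nontrivial), and set $i_{k+1} = i_k + j_k$. Since each $j_k \geq 1$, after finitely many steps we reach the first index $K^* $ with $i_{K^*} > n-m$; call $p = i_{K^*-1} \leq n-m$ the last index of the chain that is still $\leq n-m$, so that the chain covers $[0, i_{K^*}]$ with $i_{K^*} \in (n-m, n-m+m] = (n-m, n]$, in particular $i_{K^*} \leq n$ is not automatic if $j_{K^*-1}$ could exceed... but $j_{K^*-1} \leq m$ and $i_{K^*-1} \leq n-m$, so $i_{K^*} \leq n$. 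Good. Telescoping over the chain gives
\[
  x_{i_{K^*}} - x_0 = \sum_{k=0}^{K^*-1} \left( x_{i_{k+1}} - x_{i_k} \right) < v \sum_{k=0}^{K^*-1} j_k = v \, i_{K^*}.
\]
Since $v < K$ and... actually since $v$ may be negative, I want an \emph{upper} bound, so I use $x_{i_{K^*}} < v \, i_{K^*} \leq v(n-m)$ when $v \leq 0$ and $i_{K^*} \geq n-m$; in general one writes $v \, i_{K^*} = v(n-m) + v(i_{K^*} - (n-m)) \leq v(n-m)$ using $v \leq 0$ and $i_{K^*} \geq n-m$. Then I close the gap from $i_{K^*}$ up to $n$ (a gap of length $n - i_{K^*} \leq m$) using the uniform step bound: $x_n - x_{i_{K^*}} \leq K (n - i_{K^*}) \leq K m$. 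Adding, $x_n < v(n-m) + Km$, contradicting the hypothesis $x_n > (n-m)v + Km$.

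The only delicate point — and the one I would be most careful about — is the bookkeeping on the indices and the sign of $v$: ensuring the greedy chain stays within range so that Lemma's hypothesis can be invoked at each step ($i_k \leq n-m$), ensuring the last full window does not overshoot past $n$, and ensuring the inequality $v\, i_{K^*} \leq v(n-m)$ goes the right way. If $v$ is allowed to be positive (the hypothesis only says $v < K$), then one must instead bound $v\, i_{K^*} \leq v\, n$ and the constant should be checked; but in the application $v$ is a slope of a killing line and is negative, and the statement as used is with $v < 0 < K$, so the argument above suffices. Everything else is a one-line telescoping sum, so I expect no real obstacle beyond this index arithmetic.
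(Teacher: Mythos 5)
Your proof is correct in the regime $v\le 0\le K$ and follows essentially the same route as the paper: a greedy chain of ``bad'' windows built from the negation of the conclusion, a telescoping sum along the chain, and a bound of $Km$ on the leftover gap of length at most $m$. The only point to tidy up is the sign discussion: your separate estimates $v\,i_{K^*}\le v(n-m)$ and $K(n-i_{K^*})\le Km$ each need a sign hypothesis, so as written your argument does not cover the full statement (which assumes only $v<K$). The case analysis is unnecessary if you combine the two bounds as the paper does: from $x_{i_{K^*}}< v\,i_{K^*}$ and $x_n-x_{i_{K^*}}\le K(n-i_{K^*})$ one gets $x_n< Kn-(K-v)\,i_{K^*}\le Kn-(K-v)(n-m)=Km+(n-m)v$, using only $K-v>0$ and $i_{K^*}\ge n-m$.
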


\begin{proof}
Let $(x_n)$ be a sequence verifying $\sup_{i \in \N} (x_{i+1}-x_i) \leq K$. We assume that for any $i \leq n-m$, there exists $j_i \leq m$ such that $x_{i+j_i}-x_i \leq vj_i$. We set $\sigma_0 = 0$ and $\sigma_{k+1} = \sigma_k + j_{\sigma_k}$. By definition, we have
\[
  x_{\sigma_{k+1}} \leq (\sigma_{k+1}-\sigma_k) v + x_{\sigma_k},
\]
thus, for all $k \geq 0$, $x_{\sigma_k} \leq \sigma_k v$. Moreover, as $(\sigma_k)$ is strictly increasing, with steps smaller than $m$, there exists $k_0$ such that $\sigma_{k_0} \in [n-m,n]$. We conclude that
\begin{multline*}
  x_n = (x_n - x_{\sigma_{k_0}}) + x_{\sigma_{k_0}} \leq K(n-\sigma_{k_0}) + v \sigma_{k_0} = K n - (K-v)\sigma_{k_0}\\
  \leq Kn - (K-v) (n-m) \leq K m + v(n-m),
\end{multline*}
which concludes the proof.
\end{proof}

Using Lemma \ref{lem:info} and Theorem \ref{thm:asymptotic}, we bound from above the maximal position at time $N^\epsilon$.
\begin{lemma}
\label{lem:ub}
Under the assumptions of Theorem \ref{thm:main}, let $X^N$ be an $N$-BRW with reproduction law $\calL$ starting from $N\delta_0$. For any $\epsilon > 0$ small enough, for any $N \geq 1$ large enough, we have
\[
  \P\left(x^N_\floor{N^{\epsilon}}(1) \geq -(1 - 2\epsilon)\nu_N N^{\epsilon} \right) \leq N^{-\epsilon}.
\]
\end{lemma}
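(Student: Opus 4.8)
The plan is to bound the probability that the $N$-BRW reaches an unusually high position at time $\floor{N^\epsilon}$ by coupling the $N$-BRW with $N$ independent branching random walks, and then to use Lemma~\ref{lem:info} to convert the existence of a high individual at time $\floor{N^\epsilon}$ into the existence of an ancestral line staying above a line of slope $-\nu_N$ over a long time window, whose probability is controlled by the upper bound of Theorem~\ref{thm:asymptotic} (equivalently by \eqref{eqn:encadrementAsymptotic}).

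First I would set up the coupling: starting from $N\delta_0$, the measure $X^N_{\floor{N^\epsilon}}$ is dominated (in the $\preccurlyeq$ order) by $\sum_{j=1}^N \sum_{|u|=\floor{N^\epsilon}} \delta_{V^j(u)}$ where $(\T^j,V^j)$ are i.i.d.\ copies of the branching random walk all started at $0$. Hence $\P(x^N_{\floor{N^\epsilon}}(1) \geq y) \leq N\,\P(\exists |u| = \floor{N^\epsilon} : V(u) \geq y)$ for a single branching random walk. Next I would pick a truncation level: writing $K$ for a cutoff on the step size, the assumption \eqref{eqn:lowerIntegrability} (or \eqref{eqn:maxdis}) lets me discard, at cost at most $N \floor{N^\epsilon} \P(\max_{\ell\in L}\ell \geq K)$, the event that some displacement along the ancestral line of such a $u$ exceeds $K$; choosing $K$ a suitable multiple of $\log N$ makes this error term negligible compared to $N^{-\epsilon}$. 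On the complementary event, every ancestral path $(V(u_i))$ has increments at most $K$, so I can apply Lemma~\ref{lem:info} with $v = -\nu_N$, $n = \floor{N^\epsilon}$, and $m = m_N := \floor{h (\log N)^{\alpha+1}/L^*(\log N)}$ for a large constant $h$: if $V(u) \geq -(1-2\epsilon)\nu_N N^\epsilon$, then since $K m_N = o(\epsilon \nu_N N^\epsilon)$ for $N$ large (because $m_N$ is polylogarithmic in $N$ while $\nu_N N^\epsilon$ grows polynomially), we get $V(u) > (n-m_N)(-\nu_N) + K m_N$, so there is an index $i \leq n - m_N$ with $V(u_{i+j}) - V(u_i) \geq -\nu_N j$ for all $j \leq m_N$; that is, the branching random walk rooted at $u_i$ contains an individual whose path stays above the line of slope $-\nu_N$ for $m_N$ steps.

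Then I would take a union bound over the choices of the branching copy $j \leq N$ and of the branching time $i \leq \floor{N^\epsilon}$ of the relevant ancestor, and over all individuals alive at time $i$ (this last union is handled inside the definition of $\rho$, since $\rho(m_N,\nu_N)$ already counts the existence of such a path starting from a single root; by the many-to-one / first-moment bound one does not need to multiply by the number of individuals at time $i$, as $\E[W_i]$-type weighting is built into $\rho$). This gives
\[
  \P\left(x^N_{\floor{N^\epsilon}}(1) \geq -(1-2\epsilon)\nu_N N^\epsilon\right)
  \leq N \floor{N^\epsilon}\, \rho(m_N, \nu_N) + (\text{truncation error}).
\]
By the choice $\nu_N = C_* L^*(\log N)/(\log N)^\alpha$ and $m_N = \floor{h n_N}$ with $n_N = (\log N)^{\alpha+1}/L^*(\log N)$ and $a_{n_N} \sim \log N$, relation \eqref{eqn:encadrementAsymptotic} gives $\frac{1}{a_{n_N}} \log \rho(m_N,\nu_N) \to -1$ as first $n\to\infty$ then $h\to\infty$; hence for $h$ fixed large enough and $N$ large, $\rho(m_N,\nu_N) \leq N^{-1-3\epsilon}$ say, so $N^{1+\epsilon}\rho(m_N,\nu_N) \leq N^{-2\epsilon} \leq \tfrac12 N^{-\epsilon}$, and the truncation error is also $\leq \tfrac12 N^{-\epsilon}$. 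Combining, the bound $N^{-\epsilon}$ follows.

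The main obstacle I anticipate is the bookkeeping in the application of Lemma~\ref{lem:info}: one must choose the window length $m_N$ to be simultaneously (i) long enough that the slope-$(-\nu_N)$ barrier over $m_N$ steps has probability close to the ``typical'' value $N^{-1}$, which needs $m_N \asymp h (\log N)^{\alpha+1}/L^*(\log N)$ with $h$ large (via \eqref{eqn:encadrementAsymptotic}), and (ii) short enough that $K m_N = o(\epsilon \nu_N N^\epsilon)$ so that Lemma~\ref{lem:info}'s hypothesis $x_n > (n-m)v + Km$ is implied by $x_n \geq -(1-2\epsilon)\nu_N N^\epsilon$; this is why $\epsilon < 1/2$ matters, since one needs the $2\epsilon$ slack in the exponent to absorb both the $Km_N$ correction and the difference between $(n-m_N)\nu_N$ and $n\nu_N$. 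Verifying that these two requirements are compatible — polylogarithmic $m_N$ against polynomial $\nu_N N^\epsilon$ — is straightforward but must be spelled out carefully, and the estimate \eqref{eqn:lowerIntegrability} must be used (rather than \eqref{eqn:maxdis}, which only controls the total maximum, not one step) to ensure the per-step truncation error is summable over the $N\floor{N^\epsilon}$ edges considered.
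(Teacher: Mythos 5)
Your overall architecture is the same as the paper's (domination by free branching random walks, truncation of the step sizes, Lemma~\ref{lem:info}, then a union bound against $\rho$), but two of your quantitative choices break the argument. The main one is the slope you feed into Lemma~\ref{lem:info} and into $\rho$: it cannot be $-\nu_N$. By the lower bound in \eqref{eqn:encadrement} (the $\liminf \geq -1$ part, valid for every $\theta$), one has $\rho(m_N,\nu_N)\geq N^{-1-o(1)}$, so your claimed estimate $\rho(m_N,\nu_N)\leq N^{-1-3\epsilon}$ is provably false, and your union bound yields $N\floor{N^\epsilon}\rho(m_N,\nu_N)=N^{\epsilon+o(1)}\to+\infty$ rather than $N^{-\epsilon}$. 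The paper applies Lemma~\ref{lem:info} with $v=-(1-\epsilon)\nu_N$: since $-(1-2\epsilon)\nu_N n> -(n-m_N)(1-\epsilon)\nu_N+Km_N$ for large $N$ (this, not the $Km_N$ correction alone, is what the gap between $2\epsilon$ and $\epsilon$ buys), one extracts a stretch of length $m_N$ above the \emph{less steep} barrier of slope $-(1-\epsilon)\nu_N$, whose survival probability is of order $N^{-(1-\epsilon)^{-1/\alpha}}$, i.e.\ genuinely smaller than $N^{-1}$ and hence able to beat the union over $nN$ starting individuals. Relatedly, your parenthetical claim that the union over individuals alive at time $i$ is ``built into $\rho$'' is unjustified: in $N$ independent unpruned branching random walks the generation-$i$ population is exponentially large in $i$. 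The paper avoids this by applying Lemma~\ref{lem:info} to the ancestral path of $x^N_n(1)$ inside the $N$-BRW itself, which has exactly $N$ individuals per generation, and then dominates the descendance of each of the $nN$ candidates by a free branching random walk.

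The second gap is the truncation. Assumption \eqref{eqn:lowerIntegrability} only gives $\P(\max_{\ell\in L}\ell\geq K)=O(K^{-2})$, so with $K$ a multiple of $\log N$ the union over the $N\floor{N^\epsilon}$ reproduction events costs $N^{1+\epsilon}/(\log N)^2$, which is nowhere near $o(N^{-\epsilon})$; and enlarging $K$ to a power of $N$ to repair this destroys the requirement $Km_N=o(\nu_N N^\epsilon)$ needed for Lemma~\ref{lem:info}. The correct input, used in the paper, is the exponential tail $\P(\max_{\ell\in L}\ell\geq x)\leq \P\left(\sum_{\ell\in L}e^\ell\geq e^x\right)\leq e^{-x}$, a direct consequence of \eqref{eqn:critical1} (and exactly the $n=1$ case of \eqref{eqn:maxdis}, which you explicitly reject); with $K=(1+2\epsilon)\log N$ this makes the truncation error at most $1-(1-N^{-(1+2\epsilon)})^{nN}\leq N^{-\epsilon}$. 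With these two corrections your proof becomes the paper's.
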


\begin{proof}
Let $\epsilon \in (0,1)$ and $\theta >0$. By \eqref{eqn:encadrement},
\[
  \limsup_{n \to +\infty} \frac{1}{a_n} \log \rho\left( \floor{\left(\frac{\theta}{(1-\epsilon)C_*} \right)^{\frac{\alpha + 1}{\alpha}} n}, C_*(1-\epsilon) \frac{a_n}{n} \right) \leq - \frac{\theta^\frac{1}{\alpha}\Phi^{-1}(\theta)}{C_*^\frac{1}{\alpha}}.
\]
We set $m_N = \floor{\left(\frac{\theta}{(1-\epsilon)C_*} \right)^{\frac{\alpha + 1}{\alpha}} \frac{(\log N)^{\alpha + 1}}{L^*(\log N)}}.$ As $a_{(\log N)^{\alpha + 1}/L^*(\log N)} \sim_{N \to +\infty} \log N$, we have
\[\limsup_{N \to +\infty} \frac{1}{\log N} \log \rho(m_N, (1-\epsilon)\nu_N) \leq - \frac{\theta^\frac{1}{\alpha}\Phi^{-1}(\theta)}{C_*^\frac{1}{\alpha}}.\]
Observe there exists $C>0$ such that $\theta^\frac{1}{\alpha}\Phi^{-1}(\theta) - C_*^\frac{1}{\alpha} \sim_{\theta \to +\infty} - C/\theta$ by definition of $\Phi$. Therefore, for any $\epsilon>0$ small enough,  there exists $\theta>0$ such that $\rho(m_N,(1-\epsilon)\nu_N) \leq N^{-(1 + 2\epsilon)}$ for any $N \geq 1$ large enough.

We set $n = \floor{N^\epsilon}$. Observe the $N$-BRW of length $n$ is built with $nN$ independent point processes of law $\calL$ satisfying \eqref{eqn:critical1}. If $L$ is a point process with law $\calL$, we have
\[
  \P(\max L \geq x) \leq \P\left( \sum_{\ell \in L} e^\ell \geq e^x \right) \leq e^{-x}.
\]
Setting $K = (1+ 2\epsilon) \log N$, the probability there exists one individual in the $N$-BRW alive before time $n$ that made a step larger than $K$ is bounded from above by $1 - (1 - N^{-(1+2\epsilon)})^{nN} \leq N^{-\epsilon}$.

We now consider the path of length $n$ that links an individual alive at time $n$ at position $x^N_n(1)$ with its ancestor alive at time $0$. We write $y^N_n(k)$ for the position of the ancestor at time $k$ of this individual. With probability $1 - N^{-\epsilon}$, this is a path with no step greater than $K$. For $N \geq 1$ large enough, we have $-(1-2\epsilon)\nu_N n > -(n-m_N)(1-\epsilon)\nu_N + Km_N$. Applying Lemma \ref{lem:info}, for any $N \geq 1$ large enough we have
\begin{multline*}
  \left\{ \forall k < n, y^N_n(k+1) - y^N_n(k) \leq K \right\} \cap \left\{x^{N}_n(1) \geq -(1 - 2\epsilon)\nu_N n \right\} \\
  \subset \left\{ \exists j \leq n-m_N : \forall k \leq m_N, y^{N}_n(j+k)-y^{N}_n(j) \geq - (1-\epsilon) \nu_N k \right\}.  
\end{multline*}

Consequently if $x^N_n(1) \geq -(1 - 2\epsilon)\nu_N n$, there exists an individual in the $N$-BRW that has a sequence of descendants of length $m_N$ staying above the line of slope $-(1-\epsilon)\nu_N$. This happens with probability at most $n N \rho(m_N,(1-\epsilon)\nu_N)$. We conclude from these observations that for any $\epsilon>0$ and $N \geq 1$ large enough
\[
  \P\left(x^{N}_n(1) \geq -\nu_N (1- 2\epsilon) n \right) \leq C N^{-\epsilon}.
\]
\end{proof}

\begin{proof}[Proof of the upper bound of Theorem \ref{thm:main}]
Let $X^N$ be an $N$-BRW starting from $N\delta_0$. We note that the maximal displacement at time $n$ in the $N$-BRW is bounded from above by the maximum of $N$ independent branching random walks starting from $0$. By \eqref{eqn:maxdis}, for any $y \geq 0$ and $n \in \N$ we have $\P(x^N_n(1) \geq y) \leq N e^{-y}$.

Moreover, by Lemma \ref{lem:existenceSpeed} we have $\limsup_{n \to +\infty} x^N_n(1)/n \leq \E[x^N_p(1)/p]$ a.s. for all $p \geq 1$. Let $\epsilon>0$ small enough such that Lemma \ref{lem:ub} apply and $y > 0$. Setting $p = \floor{N^{\epsilon/2}}$ we have
\begin{multline*}
  v_N \leq \E\left[\tfrac{x^N_p(1)}{p} \ind{x^N_p(1) \geq p y} \right] + \E\left[\tfrac{x^N_p(1)}{p} \ind{x^N_p(1) \in \left[ -p\nu_N (1- \epsilon),py\right]} \right]\\
  +  \E\left[\tfrac{x^N_p(1)}{p} \ind{x^N_p(1) \leq -p(1-\epsilon)\nu_N} \right],
\end{multline*}
therefore
\begin{align*}
  v_N
  \leq &\int_{y}^{+\infty} \P\left(x^N_p(1) \geq p z \right) dz + y \P\left(x^N_p(1) \geq -p (1-\epsilon) \nu_N \right) - (1 - \epsilon) \nu_N\\
  \leq & N e^{-N^{\epsilon/2}y} + y N^{-\epsilon/2} - (1 - \epsilon) \nu_N.
\end{align*}
Letting $N \to +\infty$ then $\epsilon \to 0$, we conclude that
\[
  \limsup_{N \to +\infty} \frac{v_N (\log N)^\alpha}{L^*(\log N)} \leq -C^*.
\]
\end{proof}

\subsection{The lower bound}
\label{subsec:lb}

To bound from below the position of the leftmost individual in the $N$-BRW, we prove that with high probability, there exists a time $k \leq m_N$ such that $x^N_k(N) \geq -k \nu_N$. We use these events as renewal times for a particle process that stays below the $N$-BRW.

\begin{lemma}
\label{lem:lb}
Under the assumptions of Theorem \ref{thm:main}, let $X^N$ be an $N$-BRW with reproduction law $\calL$ starting from $N\delta_0$. For any $\lambda>0$ and any $\epsilon > 0$ small enough, there exists $\delta>0$ such that for all $N \geq 1$ large enough,
\[
  \P\left( \forall n \leq \lambda \frac{(\log N)^{\alpha + 1}}{L^*(\log N)}, x^N_n(N) \leq - n (1+\epsilon)\nu_N \right) \leq \exp\left( - N^{\delta} \right).
\]
\end{lemma}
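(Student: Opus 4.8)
The plan is to compare the $N$-BRW from below with an $N$-particle branching--selection process carrying an absorbing linear barrier, and to show that this auxiliary process retains its full population of $N$ individuals throughout the whole time window with overwhelming probability; on that event the leftmost particle of the $N$-BRW lies above the prescribed line, so the event in the statement cannot occur. Write $n_N=(\log N)^{\alpha+1}/L^*(\log N)$ and fix an auxiliary $\epsilon_0\in(0,\epsilon)$. First I would introduce $Z^N$, the process obtained from an $N$-BRW with reproduction law $\calL$ started from $N\delta_0$ by erasing, at every generation $n$, all individuals strictly below $-(1+\epsilon_0)n\nu_N$ before keeping the $N$ rightmost. Using the single-step coupling of Section~\ref{subsec:coupling} together with the elementary fact that deleting atoms only decreases a measure for $\preccurlyeq$, one builds a coupling in which $Z^N_n\preccurlyeq X^N_n$ for all $n$; hence whenever $Z^N_n$ has full mass $N$ all its atoms sit at height at least $-(1+\epsilon_0)n\nu_N$, so $x^N_n(N)\ge -(1+\epsilon_0)n\nu_N>-(1+\epsilon)n\nu_N$. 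Thus the event to be bounded is contained in $\{Z^N_n(\R)<N \text{ for some } 1\le n\le \lambda n_N\}$, and it suffices to prove that $Z^N$ keeps full population on $[1,\lambda n_N]$ with probability at least $1-e^{-N^{\delta}}$.

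The heart of the matter is a first- and second-moment estimate for a single family. The $N$ initial individuals launch $N$ independent branching random walks, and as long as $Z^N$ has not been reduced below $N$ particles the selection is inactive, so each family evolves independently as a branching random walk killed below $-(1+\epsilon_0)j\nu_N$. Applying to one family the computations of Lemma~\ref{lem:lowerbound} --- with the straight line $-\theta j$ there replaced by $-(1+\epsilon_0)j\nu_N$ and a strip of width of order $a_{m_N}$ placed above it, where $m_N=\floor{\mu\,n_N}$ for a suitable $\mu=\mu(\lambda,\epsilon_0)\le\lambda$ --- and using \eqref{eqn:integCons} to discard individuals with too large offsprings exactly as in that proof, one gets: the probability that this family has at time $m_N$ at least $N$ descendants whose ancestral lines stayed inside the strip is at least $N^{-c}$, with $c=c(\epsilon_0)<1$. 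The exponent $c$ is the quantity $(C_*/\theta)^{1/\alpha}$ of Theorem~\ref{thm:asymptotic} in the relevant scaling, namely $(1+\epsilon_0)^{-1/\alpha}$ up to the choices of $\mu$ and of the strip width, and the decisive point is that it is strictly less than $1$.

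The amplification is then routine: by independence, the probability that no family achieves the above event is at most $(1-N^{-c})^N\le e^{-N^{1-c}}$. On the complement, some family has at time $m_N$ at least $N$ descendants above the barrier; since the barrier discards only individuals below it and the selection discards only leftmost ones, the $N$ slots are, from the first generation at which there are $\ge N$ admissible particles onwards, filled by particles lying above the barrier, and because each admissible individual has offspring mean $>1$ the population, once it has reached $N$, is not reduced again while the barrier leaves room — which it does once $n$ exceeds $O(n_N^{\alpha/(\alpha+1)})$. A short companion argument of the same flavour, but on an event of even larger probability, handles the earlier generations $n<m_N$, when the particles are still close to the barrier: one either uses a slightly curved barrier leaving room of order $a_{m_N}$ from the start, or lets the $N$-BRW run freely for $o(n_N)$ steps before installing the barrier. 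Putting these together, $Z^N$ keeps full population on $[1,\lambda n_N]$ outside an event of probability at most $e^{-N^{1-c}}+(\text{smaller terms})\le e^{-N^{\delta}}$ for any $\delta\in(0,1-c)$ and $N$ large, which is the claim.

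I expect the main obstacle to be the control of the interplay between the absorbing barrier and the $N$-cap in the last step: the barrier must be chosen far enough below the target line $-(1+\epsilon)n\nu_N$ (hence $\epsilon_0<\epsilon$, and a strip of width of order $a_{m_N}$) to give the families enough room to build up a population of size $N$, yet close enough that $x^N_n(N)$ is still forced above $-(1+\epsilon)n\nu_N$; and the intermediate time $m_N$ must be taken on the scale $n_N$ precisely so that the second-moment bound of Lemma~\ref{lem:lowerbound} yields a probability polynomial in $N$ with exponent $<1$ — this is exactly what \eqref{eqn:encadrement} provides. The remaining ingredients (the pre-equilibrium phase, the fact that one good family certifies full population, and the a priori lower bounds used to control $n<m_N$, which rely on \eqref{eqn:lowerIntegrability} as in Lemma~\ref{lem:cloudsize}) are technical but standard, the only subtlety being the two-sided constraint coming simultaneously from the barrier and from the selection.
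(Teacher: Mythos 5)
Your overall strategy (a barrier at a slightly steeper slope, the survival probability $\rho\approx N^{-c}$ with $c=(1+\epsilon)^{-1/\alpha}<1$ from \eqref{eqn:encadrement}, and amplification over $N$ independent families) matches the paper's, but the core step --- showing that your barrier-plus-selection process $Z^N$ retains full population --- is where the argument breaks down, and the way you propose to establish it does not work. Once $Z^N$ has $N$ particles above the barrier, the next generation typically has more than $N$ above-barrier children and the selection \emph{is} active: it removes the leftmost among the survivors of the barrier, so the families inside $Z^N$ do not evolve as independent killed branching random walks. Worse, the ``good family'' produced by the second-moment computation is, by construction, confined to a strip hugging the barrier, i.e.\ it consists of low-lying particles; these are precisely the ones the $N$-selection culls first, so the existence of such a family in a free killed BRW does not certify anything about $Z^N$. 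The subsequent claim that ``once the population has reached $N$ it is not reduced again'' because the offspring mean exceeds $1$ is also unjustified: at every one of the $\lambda(\log N)^{\alpha+1}/L^*(\log N)$ generations there is a positive probability that fewer than $N$ children land above the barrier, and you give no quantitative control of this. In short, you have reduced the lemma to a statement about the killed-and-selected process that is essentially as hard as the lemma itself.

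The paper sidesteps this interaction entirely by arguing by contradiction \emph{on the bad event}. Let $\calB_N=\{\forall n\le m_N+\floor{\alpha\log N},\ x^N_n(N)\le -n\nu_N(1+2\epsilon)\}$. On $\calB_N$ the selection in the true $N$-BRW only ever discards particles lying \emph{below} the line of slope $-\nu_N(1+2\epsilon)$, so on $\calB_N$ the process $X^N$ dominates, for $\preccurlyeq$, the union of $N$ independent killed branching random walks \emph{without any selection}. For these, independence is genuine, and the survival estimate $\rho(m_N,(1+\epsilon)\nu_N)\ge N^{-(1-\delta)}$ combined with a Galton--Watson boosting step over an extra $\floor{\alpha\log N}$ generations (to go from one surviving line to $N+1$ particles, rather than your direct ``at least $N$ descendants in the strip'' claim, which the Cauchy--Schwarz bound of Lemma \ref{lem:lowerbound} does not by itself deliver) shows that with probability at least $1-\exp(-N^{\delta/2})$ one of them carries more than $N$ particles at time $m_N+\floor{\alpha\log N}$. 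Since $X^N$ has exactly $N$ particles, domination is then impossible, so $\calB_N$ fails. You would need to restructure your proof along these lines --- replacing the analysis of $Z^N$ by this population-count contradiction --- for the argument to close.
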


\begin{proof}
For $N \in \N$ and $\lambda > 0$, we set $m_N = \floor{\lambda \frac{(\log N)^{\alpha + 1}}{L^*(\log N)}}$. Let $\epsilon > 0$, by \eqref{eqn:encadrement}, we have
\[
  \liminf_{N \to +\infty} \frac{1}{\log N} \log \rho\left(m_N,(1+\epsilon) \nu_N\right) \geq -(1 + \epsilon)^{-\frac{1}{\alpha}}>-1.
\]
Consequently for any $\epsilon>0$, for any $\delta > 0$ small enough we have $\rho(m_N,(1+\epsilon)\nu_N) \geq \frac{1}{N^{1-\delta}}$ for any $N \geq 1$ large enough.

Let $L$ be a point process with law $\calL$. There exists $R>0$ such that $\E\left(\# \left\{ \ell \in L : \ell \geq -R \right\}\right) > 1$. We consider the branching random walk in which individuals that cross the line of slope $-R$ are killed. By standard Galton-Watson processes theory\footnote{See e.g., \cite{FlW07}.}, there exists $r>0$ and $\alpha > 0$ such that for any $N \geq 1$ large enough the probability there exists more than $N$ individuals alive at time $\floor{\alpha \log N}$ in this process is bounded from below by $r$. Thus for all $N \geq 1$ large enough, the probability there exists at least $N+1$ individuals alive at time $m_N + \floor{\alpha \log N}$ in a branching random walk in which individuals that cross the line of slope $-\nu_N(1+2\epsilon)$ are killed is bounded from below by $r \rho(m_N,(1+\epsilon) \nu_N)$.

We set $\calB_N = \left\{ \forall n \leq m_N+\floor{\alpha \log N}, x^N_n(N) \leq - n \nu_N(1 + 2\epsilon) \right\}$. By Corollary \ref{cor:increasing}, the $N$-BRW can be coupled with $N$ independent branching random walks starting from $0$, in which individuals below the line of slope $-\nu_N(1 + 2 \epsilon)$ are killed, in a way that on $\calB_N$, $X^N$ is above the branching random walks for the order $\preccurlyeq$. The probability that at least one of the branching random walks has at least $N+1$ individuals at time $m_N + \floor{\alpha \log N}$ is bounded from below by
\[
 1 - (1-r\rho(m_N,(1+ \epsilon)\nu_N))^{N} \geq 1 - \exp(-N^{\delta/2}),
\]
for any $N \geq 1$ large enough. On this event, the coupling is impossible as $X^N$ has no more that $N$ individuals alive at time $N$, thus $\calB_N$ is not satisfied. We conclude that $\P(\calB_N) \leq e^{-N^{\delta/2}}$.
\end{proof}

\begin{proof}[Proof of the lower bound of Theorem \ref{thm:main}]
The proof is based on a coupling of the $N$-BRW $X^N$ with another particle system $Y^N$, in a way that for any $n \in \N$, $Y^N_n \preccurlyeq X^N_n$. Let $(L^j_{n}, j \leq N, n \geq 0)$ be an array of i.i.d. point processes with law $\calL$. We construct $X^N$ such that $L^j_{n}$ represents the set of children of the individual $x^N_n(j)$, with $X^N_0 = N \delta_0$. By Lemma \ref{lem:lb}, for any $\epsilon>0$ small enough, there exists $\delta > 0$ such that setting $m_N = \floor{\frac{\left( \log N \right)^{\alpha + 1}}{L^*(\log N)}}$, for any $N \geq 1$ large enough we have
\[
  \P\left( \forall n \leq m_N, x^N_n(N) \leq - n (1+\epsilon)\nu_N \right) \leq \exp\left( - N^{\delta} \right).
\]

We introduce $T_0 = 0$ and $Y^N_0 = N \delta_0$. The process $Y^N$ behaves as an $N$-BRW, using the same point processes $(L^j_{n})$ as used for $X^N$ until time
\[
  T_1 = \min\left( m_N, \inf\left\{j \geq 0 : y^N_j(N) > -j\nu_N (1+ \epsilon) \right\}\right).
\]
We then write $Y^N_{T_1^+} = N \delta_{y^N_{T_1}(N)}$, i.e. just after time $T_1$, the process $Y^N$ starts over at time $T_1^+$ from its leftmost individual. For any $k \in \N$, the process behaves as an $N$-BRW between times $T_k^+$ and $T_{k+1}$, defined by
\[
  T_{k+1} = T_k + \min\left( m_N, \inf\left\{j \geq 0 :  y^N_{T_k + j}(N)-y^N_{T_k}(N) > -j\nu_N(1 + \epsilon) \right\} \right).
\]
By construction, for any $k \in \N$ we have $Y^N_k \preccurlyeq X^N_k$ a.s. and in particular $y^N_k(N) \leq x^N_k(N)$.

As $(T_k-T_{k-1}, k \geq 1)$ is a sequence of i.i.d. random variables, Lemma \ref{lem:existenceSpeed} leads to
\[
  \lim_{k \to +\infty} \frac{x^N_{T_k}(N)}{k} = \E(T_1) v_N \quad \mathrm{a.s.}
\]
Moreover, as $(y^N_{T_k}(N) - y^N_{T_{k-1}}(N), k \geq 1)$ is another sequence of i.i.d. random variables, by law of large numbers we have
\[
  \lim_{k \to +\infty} \frac{y^N_{T_k}(N)}{k} = \E(y^N_{T_1}(N)) \quad \mathrm{a.s.}
\]
Combining these two estimates, we have $v_N \geq \frac{\E(y^N_{T_1}(N))}{\E(T_1)}$.

We now compute
\begin{align*}
  \E(y^N_{T_1}(N) ) &= \E\left(y^N_{T_1}(N) \ind{T_1 < m_N} \right) + \E\left( y^N_{T_1}(N) \ind{T_1 = m_N} \right)\\
  &\geq \E\left( -\nu_N(1+\epsilon) T_1 \ind{T_1 < m_N} \right) + \E\left( y^N_{T_1}(N) \ind{T_1 = m_N} \right)\\
  &\geq -\nu_N (1+ \epsilon)\E(T_1) + \E\left( y^N_{T_1}(N) \ind{T_1 = m_N}\right).
\end{align*}
Note that for all $j \leq T_1$, we have $Y^N_j = X^N_j$. Moreover, by Corollary \ref{cor:increasing}, we may couple $X^N$ with an $N$-BRW $\tilde{X}$ in which individuals make only one child, with a displacement of law $\max L$. Consequently, we have
\[
  y^N_{T_1}(N) \geq \left(\sum_{n=1}^{T_1} \min_{j \leq N} \left(\max L_{j,n}\right)\right) \quad \mathrm{a.s.}
\]
which leads to
\[
  v_N \geq -\nu_N (1 + \epsilon) + \frac{1}{\E(T_1)}\E\left[\left(\sum_{n=1}^{m_N} \min_{j \leq N} \left(\max L_{j,n}\right)\right) \ind{T_1 = m_N} \right].
\]
Using the Cauchy-Schwarz inequality and \eqref{eqn:lowerIntegrability}, we have
\[
\E\left[\left(\sum_{n=1}^{m_N} \min_{j \leq N} \left(\max L_{j,n}\right)\right) \ind{T_1 = m_N} \right] \geq - C (N m_N)^{1/2} \P\left(T_1 = m_N\right)^{1/2}.
\]
We apply Lemma \ref{lem:lb} and let $N \to +\infty$ then $\epsilon \to 0$ to prove that
\[
  \liminf_{N \to +\infty} \frac{v_N (\log N)^\alpha}{L^*(\log N)} \geq -C_*.
\]
\end{proof}

\paragraph*{Acknowledgement.} I wish to thank Zhan Shi for introducing me to this topic, and for his help and advices during the research process. I also thank the referee for his help improving the early versions of this manuscript.

\nocite{*}
\bibliographystyle{plain}

\def\cprime{$'$}

\end{document}